\newcommand\numberthis{\addtocounter{equation}{1}\tag{\theequation}}
\newtheorem{theorem}{Theorem}
\newtheorem{lemma}[theorem]{Lemma}
\newtheorem{coro}[theorem]{Corollary}
\numberwithin{theorem}{section} 
\numberwithin{equation}{section}
\newtheorem{defi}[theorem]{Definition}
\newtheorem{remark}[theorem]{Remark}
\newtheorem{example}[theorem]{Example}
\theoremstyle{nonumberplain}
\newtheorem{examples}[theorem]{Examples}
\newtheorem{acknow}[theorem]{Acknowledgements}
\theoremstyle{nonumberplain}
\newtheorem{proof}{Proof}
\newtheoremstyle{proofstyle}%
  {\item[\theorem@headerfont\hskip\labelsep ##1\theorem@separator]}%
  {\item[\theorem@headerfont\hskip\labelsep ##1\  ##3\theorem@separator]}
\theoremstyle{proofstyle}
\newtheorem{proofof}{Proof}
\newcommand{\R}{\ensuremath{\mathbb R}}
\newcommand{\N}{\ensuremath{\mathbb N}}
\newcommand{\nc}{\ensuremath{_{NC}}}
\newcommand{\crr}{\ensuremath{_{CR}}}
\newcommand{\poh}{\ensuremath{^{1/2}}}
\newcommand{\pmoh}{\ensuremath{^{-1/2}}}
\newcommand{\inv}{\ensuremath{^{-1}}}
\newcommand{\dx}{\,\dif x}
\newcommand{\ds}{\,\dif s}
\newcommand{\distSq}[2]{\ensuremath{\operatorname{d}^2(#1,#2)}}
\newcommand{\distAlpha}[1]{\ensuremath{\operatorname{d}^2_{#1}}}
\newcommand{\ins}{\ensuremath{\subseteq}}
\newcommand{\fa}{ \ensuremath{\,\forall}}
\newcommand{\cp}{\ensuremath{c_\textup{P}}}
\newcommand{\cjc}{\ensuremath{c_\textup{apx}}}
\newcommand{\ctr}{\ensuremath{c_\textup{tr}}}
\newcommand{\cqi}{\ensuremath{c_\textup{QI}}}
\newcommand{\csr}{\ensuremath{c_\textup{sr}}}
\newcommand{\cquot}{\ensuremath{c_\textup{quot}}}
\newcommand{\cinner}{\ensuremath{M_\textup{int}}}
\newcommand{\cbd}{\ensuremath{M_\textup{bd}}}
\newcommand{\cpatch}{\ensuremath{M_\textup{patch}}}
\newcommand{\cdf}{\ensuremath{c_\textup{dF}}}
\newcommand{\cf}{\ensuremath{c_\textup{F}(\Om)}}
\newcommand{\cinv}{\ensuremath{c_\textup{inv}}}
\newcommand{\com}{\ensuremath{c_\omega}}
\newcommand{\calph}{\ensuremath{c(\Tcal)}}
\newcommand{\csecond}{\ensuremath{C_{2}}}
\newcommand{\joo}{\ensuremath{j_{1,1}}}
\newcommand{\hmax}{\ensuremath{h_\textup{max}}}
\newcommand{\omz}{\ensuremath{\omega_{0}}}
\DeclareMathOperator{\bisec}{bisec}
\DeclareMathOperator{\red}{red}
\DeclareMathOperator{\ddiv}{div}
\newcommand{\nablanc}{\ensuremath{\nabla\nc}}
\DeclareMathOperator{\conv}{conv}
\newcommand{\dell}{\ensuremath{\partial}}
\newcommand{\Inc}{\ensuremath{I\nc}}
\DeclareMathOperator{\diam}{diam}
\DeclareMathOperator{\dist}{dist}
\DeclareMathOperator{\Mid}{mid}
\DeclareMathOperator{\D}{D}
\DeclareMathOperator{\Curl}{Curl}
\newcommand{\Om}{\ensuremath{\Omega}}
\newcommand{\bOm}{\ensuremath{\partial\Omega}}
\newcommand{\Ncal}{\ensuremath{\mathcal N}}
\newcommand{\Tcal}{\ensuremath{\mathcal T}}
\newcommand{\Tcalf}{\ensuremath{ \hat{\mathcal T}}}
\newcommand{\Ecal}{\ensuremath{\mathcal E}}
\newcommand{\Ecalf}{\ensuremath{\hat{\mathcal E}}}
\newcommand{\Admis}{\ensuremath{\mathbb{T}}}
\newcommand{\hk}{\ensuremath{h_K}}
\newcommand{\htc}{\ensuremath{h_\Tcal}}
\newcommand{\jump}[1]{\ensuremath{[#1]}}
\newcommand{\Hnc}{\ensuremath{H^1\nc(\Tcal)}}
\newcommand{\CR}{\ensuremath{CR^1_0(\Tcal)}}
\newcommand{\CRf}{\ensuremath{CR^1_0(\Tcalf)}}
\newcommand{\Soz}{\ensuremath{S^1_0(\Tcal)}}
\newcommand{\So}{\ensuremath{S^1(\Tcal)}}
\newcommand{\Sof}{\ensuremath{S^1(\Tcalf)}}
\newcommand{\Sozf}{\ensuremath{S^1_0(\Tcalf)}}
\newcommand{\HTriang}{\ensuremath{H^1(\Tcal)}}
\newcommand{\Ltwo}[1]{\ensuremath{L^2(#1)}}
\renewcommand\abs[1]{ | #1 |  }
\renewcommand\norm[1]{| \! | #1 | \! |}
\newcommand\NormLtwo[2]{\norm{#1}_{L^2(#2)}}
\newcommand\NormLtwoOm[1]{\norm{#1}_{L^2(\Om)}}
\newcommand\NormLtwoK[1]{\norm{#1}_{L^2(K)}}
\newcommand\NormLtwoT[1]{\norm{#1}_{L^2(T)}}
\newcommand\NormEnergy[1]{| \! | \! | #1 | \! | \! |}
\newcommand\NormEnergyOn[2]{\NormEnergy{ #1 }_{#2}}
\newcommand\NormEnergync[2]{\NormEnergy{ #1 }_{NC(#2)}}
\newcommand\NormEnergyncOnly[1]{\NormEnergy{ #1 }_{NC}}
\newcommand\NormEnergyncK[1]{\NormEnergy{ #1 }_{NC(K)}}
\newcommand\NormEnergyncT[1]{\NormEnergy{ #1 }_{NC(T)}}
\newcommand{\vc}{\ensuremath{v_{\text{C}}}}
\newcommand{\vcr}{\ensuremath{v\crr}}
\newcommand{\Jc}{\ensuremath{J_{\text{C}}}}
\def\Xint#1{\mathchoice
{\XXint\displaystyle\textstyle{#1}}%
{\XXint\textstyle\scriptstyle{#1}}%
{\XXint\scriptstyle\scriptscriptstyle{#1}}%
{\XXint\scriptscriptstyle\scriptscriptstyle{#1}}%
\!\int}
\def\XXint#1#2#3{{\setbox0=\hbox{$#1{#2#3}{\int}$}
\vcenter{\hbox{$#2#3$}}\kern-.5\wd0}}
\newcommand{\intmean}{\Xint-}
\newcommand{\ol}[1]{\overline{#1}}
\newlength{\raisebulletlen}
\newcommand\pbullet{\raisebox{\raisebulletlen}{\,\scriptsize$\bullet$}\,}
\title{Constants in Discrete Poincar\'e and Friedrichs Inequalities and Discrete Quasi-Interpolation}
\author{Carsten Carstensen \and Friederike Hellwig}
\date{}
\begin{document}
\maketitle

\begin{abstract}
%% Text of abstract
\textbf{Abstract.} { This paper provides a discrete Poincar\'e inequality in $n$ space
dimensions on a simplex $K$ with explicit constants.  This inequality bounds the norm of the piecewise derivative of
functions with  integral mean zero on $K$ and all integrals of jumps zero
along all interior sides by its Lebesgue norm by $C(n)\diam(K)$.
The explicit constant $C(n)$ depends only on the dimension $n=2,3$
in case of an  adaptive triangulation with the newest vertex bisection.
The second part of this paper proves the stability of an enrichment
operator, which leads to the stability and approximation of a (discrete)
quasi-interpolator applied in the proofs of the discrete Friedrichs
inequality and discrete reliability estimate with explicit bounds on the
constants in terms of the minimal angle \(\omz\) in the triangulation.
The analysis allows the bound of two constants \(\Lambda_1\) and
\(\Lambda_3\) in the axioms of adaptivity for the practical choice of the
bulk parameter with guaranteed optimal convergence rates.

\textbf{Keywords. } discrete Poincar\'e inequality, discrete Friedrichs inequality, enrichment operator, quasi-interpolation, discrete reliability

\textbf{AMS Subject Classication. } 65N30
}
\end{abstract}

\section{Introduction}

The first topic is the {\em discrete Poincar\'e inequality} on a simplex $K$ with diameter $h_K$ and a  refinement $\mathcal{T}$
by newest-vertex bisection (NVB) of $K$. Then any compatible piecewise Sobolev function $v\nc$ 
such as  Crouzeix-Raviart functions 
with integral mean zero over $K$ and the piecewise gradient $\nablanc v\nc$ satisfies
\begin{equation}\label{eqccintro1}
\NormLtwoK{ v\nc } \leq C(n) h_K \NormLtwoK{ \nablanc v\nc}
\end{equation}
with a universal constant $C(n)$, which exclusively depends on the dimension $n$. This paper provides bounds 
of $C(n)$ for any dimension $n$ in terms of the refinements from \cite{stevenson08,gss2014}
with $C(2)\le \sqrt{3/8}$ or $C(3)\le\sqrt{5}/3$ and utilizes them to prove an explicit constant in an interpolation error estimate for a discrete nonconforming interpolation operator. The discrete Poincare inequality \eqref{eqccintro1} is utilized e.g. in \cite{HellaSafemI, HellaCarstenElast} without further specification of the discrete Poincare constant. 

The second topic is an enrichment operator $J_1:  \CR\to \Soz$ between the nonconforming
and conforming $P_1$ finite element spaces with respect to a regular triangulation $\mathcal{T}$ into triangles 
for $n=2$ with local mesh-size 
$h_\mathcal{T}$ (defined by $ h_\mathcal{T}|_K=h_K= \diam(K)$ on $K\in \mathcal{T}$) and the approximation property 
\begin{equation}\label{eqccintro2}
\NormLtwoOm{ h_\mathcal{T}^{-1}(\vcr- J_1 \vcr) } \le \cjc  \NormLtwoOm{ \nablanc \vcr } \quad\text{ for all }\vcr\in CR_0^1(\mathcal{T})
\end{equation}
and some global constant $\cjc\le C(\mathcal{T})\sqrt{ \cot(\omz)}$ 
for the minimal angle $\omz$ in the triangulation and some topological constant $C(\mathcal{T})$ which depends 
only on the number of triangles that share one vertex in $\mathcal{T}$. The combination   of 
\eqref{eqccintro2} with an inverse estimate implies stability  of $J_1$
with respect to the piecewise $H^1$ norms. 

Another application of \eqref{eqccintro2} is the {\em discrete Friedrichs inequality } for Crouzeix-Raviart functions 
\begin{equation}\label{eqccintro3}
\NormLtwoOm{\vcr } \le \cdf  \NormLtwoOm{\nablanc \vcr }\quad\text{ for all }\vcr\in CR_0^1(\mathcal{T})
\end{equation}
and some global constant $\cdf$.

The third topic is the quasi-interpolation $J:=J_1\circ \Inc:H^1_0(\Omega)\rightarrow \Soz$, which combines the nonconforming interpolation operator
$\Inc$ with the enrichment operator $J_1$, and guarantees the error estimate
\begin{equation*}
 \NormLtwoOm{\htc\inv(\operatorname{id}-J)v} \leq \cqi \NormEnergy{v}\text{ for all }v\in H^1_0(\Omega)
\end{equation*}
for some global constant \(\cqi\).
This first-order approximation property with \(\cqi\) and some stability constants are derived explicitly in terms of $\cjc$. A special case of this operator yields a discrete quasi-interpolation $J_{dQI}:\Sozf\rightarrow\Soz$ for a triangulation \(\Tcal\) with refinement \(\Tcalf\) such that any $\hat v_C\in\Sozf$ satisfies $\hat v_C= J_{dQI} \hat v_C$ on unrefined elements \(\Tcal\cap\Tcalf\).
This enables applications to the discrete reliability e.g. in \cite{cgs2013b} and generally in the axioms of adaptivity \cite{axioms,safem}
and leads to constants, which allow for a lower bound of the bulk parameter in adaptive mesh refining algorithms for guaranteed optimal convergence rates.

The remaining parts of this paper are organized as follows. The necessary notation on the triangulation and
its refinements follows in Section~\ref{sec:notation} with a discrete trace identity. The discrete Poincare inequality \eqref{eqccintro1} is established  in Section~\ref{sec:discretePoincare}.  The analysis provides an easy proof of the Poincare constant in \(2\)D for a triangle with constant  $1/\sqrt{6}$ which is not too large in comparison with the value $1/\joo$ from \cite{laugesensiudeja}
for the first positive root $\joo$ of the Bessel function of the first kind.  Section~\ref{sec:enrichment} introduces and analyses the  enrichment operator $J_1$ with bounds on
$\cjc$  in \eqref{eqccintro2} and $\cdf$ in \eqref{eqccintro3}. The quasi-interpolation follows in Section~\ref{sec:quasiinterpolation} and the 
application to discrete reliability in Section~\ref{sec:axioms} concludes this paper.  

The analysis of explicit constants is performed in \(2\)D for its clear geometry of a nodal patch with an easy topology.
The \(3\)D analog is rather more complicated as there is no one-dimensional enumeration of all simplices, which 
share one vertex in a triangulation. The results are valid for higher dimension as well but the constants are less 
immediate to derive. The work originated from lectures on computational PDEs at the  Humboldt-Universit\"at zu Berlin over the last years to introduce students to the discrete functions spaces without a deeper introduction of Sobolev spaces. 

\section{Notation}\label{sec:notation}
For $n=2,3$ and any bounded Lipschitz domain $\Omega\ins\R^n$ with polyhedral boundary, let $\Tcal$ denote a regular triangulation of $\Omega$ into $n$-simplices. Let $\Ecal$ (resp. $\Ecal(\Omega)$ or $\Ecal(\bOm)$) denote the set of all sides (resp. interior sides or boundary sides) in the triangulation and $\Ncal$ (resp. $\Ncal(\Omega)$ or $\Ncal(\bOm)$) denote the set of all nodes (resp. interior nodes or boundary nodes) in the triangulation. For any $n$-simplex $T\in\Tcal$ with volume $\abs{T}$, let $\Ecal(T)$ denote the set of its sides (edges for $n=2$ resp. faces for $n=3$), $\Ncal(T)$ the set of its nodes, and let $h_T:=\diam(T)$ be its diameter. For any $L^2$ function $v\in\Ltwo{\omega}$, define the integral mean $\intmean_\omega v \dx := \abs{\omega}\inv \int_\omega v\dx$ for $\omega=T\in\Tcal$ or $\omega=E\in\Ecal$ with surface measure $\abs{E}$.
For any node $z\in\Ncal$, let $\Tcal(z):=\{T\in\Tcal\,|\, z\in\Ncal(T)\}$ and $\omega_z:=\bigcup_{T\in\Tcal(z)} T$ the nodal patch. For $E\in\Ecal$, let $\omega_E = \bigcup_{T\in\Tcal, E\in\Ecal(T)} T$. For $T\in\Tcal$, let $\omega_T:=\bigcup_{z\in\Ncal(T)}\omega_z$ and let $\measuredangle(T,z)$ denote the interior angle of $T$ at the node $z\in\Ncal(T)$.

The unit normal vector $\nu_T$ along $\dell T$ points outward. For any side $E=\partial T_+\cap\partial T_-\in \Ecal$ shared by two simplices, the enumeration of the neighbouring simplices $T_\pm$ is fixed. Given any function $v$, define the jump of $v$ across an inner side $E\in\Ecal(\Om)$ by $\jump{v}_E:=v|_{T_+} - v|_{T_-}\in L^2(E)$ and the jump across a boundary side $E\in\Ecal(\bOm)$ by $\jump{v}_E:= v$.

\begin{defi}[bisection]
 Any $n$-simplex $T=\conv\{P_1,\dots,P_{n+1}\}$ is identified with the $(n+1)$-tuple $(P_1,\dots,P_{n+1})$. Its refinement edge is $\ol{P_1P_{n+1}}$ and $\bisec(T):=\{T_1, T_2\}$ is defined with $T_1:=\conv\{P_1,(P_1+P_{n+1})/2, P_2,\dots,P_n\}$ and\\ $T_2:=\conv\{P_{n+1},(P_1+P_{n+1})/2,P_2,\dots,P_n\}$.
The ordering of the nodes in the $(n+1)$-tuples and thus, the refinement edges, for the new simplices $T_1$ and $T_2$ are fixed and for $n=3$ additionally depend on the type of the (tagged) $n$-simplex \cite{stevenson08}.
\end{defi}

\begin{remark}\label{rem:bisectDiam}
 There exists $M=M(n)\in\N$ such that any $n$-simplex $K$ and $\Tcal:=\bisec^{(M)}(\{K\}):= \bisec(\bisec(\dots(\bisec(\{K\}))\dots))$ satisfies
\begin{equation*}
 \max\{h_T\,|\, T\in\Tcal\} \leq h_K/2.
\end{equation*}
It holds that $M(2)=3$ and $M(3)=7$. (The latter follows from mesh-refining of the reference tetrahedron of all types \cite{stevenson08} by undisplayed computer simulation.)
\end{remark}

\begin{defi}
 Given any initial triangulation $\Tcal_0$, let $\Admis=\Admis(\Tcal_0)$ be the set of all regular triangulations obtained from $\Tcal_0$ with a finite number of successive bisections of appropriate simplices. For any $\Tcal\in\Admis$ and $\omega\subseteq \Omega$, let $\Tcal(\omega):=\{K\in\Tcal\,|\, K\ins \overline{\omega}\}$. Let \(\bigcup\Admis\) the set of all admissible simplices \(T\) with \(T\in\Tcal\) for some \(\Tcal\in\Admis\).
The \emph{level} of an $n$-simplex $T\in\bigcup\Admis$ with $T\ins K\in\Tcal_0$ is defined as $\ell(T):=\log_2(\abs{K}/\abs{T})\in\N_0$.
\end{defi}

\begin{remark}
 For any $\Tcal\in\Admis$ and $T\in\bigcup\Admis$ (not necessarily $T\in\Tcal$), $\Tcal(T)$ satisfies exactly one of the following statements.
\begin{enumerate}[label=(\alph*)]
 \item There exists $K\in\Tcal$ such that $T\ins K$.
 \item $\Tcal(T)\in \Admis(\{T\})$, in particular, $\Tcal(T)$ is a regular triangulation of $T$ with $2\leq \abs{\Tcal(T)}$.
\end{enumerate}
\end{remark}

\begin{defi}
Define the spaces
\begin{align*}
 \HTriang &:= \{ v\in \Ltwo{\Omega} \,|\, \fa T\in\Tcal,\, v|_T\in H^1(\operatorname{int}(T))\equiv H^1(T) \},\\
 \Hnc &:= \{ v\nc\in \HTriang \,|\, \fa E\in\Ecal(\Omega),\, \intmean_E \jump{v\nc}_E \ds = 0 \}.
\end{align*}
Define the discrete spaces
%TODO Spacing links kleiner
\begin{align*}
 P_1(\Tcal)&:=\{ v_1\in  \Ltwo{\Omega}\,|\, \fa T\in\Tcal,\, v_1|_T\text{ is polynomial of degree }\leq 1\text{ on }T\}\subseteq \HTriang,\\
 \Soz &:= P_1(\Tcal) \cap H^1_0(\Om)\subseteq H^1_0(\Om),\\
 \CR &:= \{ \vcr \in  P_1(\Tcal)\,|\, \fa E\in\Ecal(\Omega),\, \vcr \text{ continous at } \Mid(E),\\
 &\quad\quad\quad\quad\quad\quad\quad\quad\fa E\in\Ecal(\bOm),\, \vcr(\Mid(E))=0\}\subseteq \Hnc.
\end{align*}

For $v\in\HTriang$ let $\nablanc v$ denote the piecewise weak gradient and for any measurable subset $\omega\ins \Omega$, let $\NormEnergync{v}{\omega}:=\NormLtwo{\nablanc v}{\omega}$, $\NormEnergyncOnly{v}:=\NormEnergync{v}{\Omega}$ the nonconforming energy norm.
\end{defi}
A piecewise application of the Gau\ss{} divergence theorem leads to the following discrete trace identity.
\begin{lemma}[Discrete trace identity]\label{lem:discreteTraceIdentity}
 Let $T=\conv\{E,P\}$ be an $n$-simplex with vertex $P\in\Ncal(T)$ and opposite side $E\in\Ecal(T)$ and $\Tcal$ a regular triangulation of $T$. Then any $v\nc\in\Hnc$ satisfies the trace identity
\begin{equation*}
 \intmean_E v\nc\ds = \intmean_T v\nc \dx + \frac1n \,\intmean_T (x-P)\cdot \nablanc v\nc \dx.
\end{equation*}
\end{lemma}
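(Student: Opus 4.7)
The plan is to apply Gauss's divergence theorem elementwise with the test vector field $\varphi(x) := x - P$, and then exploit the $\Hnc$-jump condition to kill all contributions from interior sides of the refinement $\Tcal$.

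More precisely, I will first note $\ddiv \varphi = n$, so for each $K \in \Tcal$ the product rule $\ddiv(v\nc \varphi) = n\, v\nc + \varphi\cdot \nabla v\nc$ together with the divergence theorem gives
\begin{equation*}
 n\int_K v\nc \dx + \int_K (x-P)\cdot \nabla v\nc \dx = \int_{\partial K} v\nc\, (x-P)\cdot\nu_K \ds.
\end{equation*}
Summing over $K\in\Tcal$ produces $n\int_T v\nc + \int_T (x-P)\cdot\nablanc v\nc$ on the left and a sum of side integrals on the right, which I regroup into contributions from interior sides $F\in\Ecal(T)$ of $\Tcal$ (carrying the jump $\jump{v\nc}_F$) and from sides lying on $\partial T$.

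The crucial geometric observation is that on any flat $(n-1)$-face $F$ with unit normal $\nu_F$, the scalar $(x-P)\cdot \nu_F$ is \emph{constant} in $x\in F$, because for $x,y\in F$ the difference $x-y$ is tangent to $F$ and therefore $(x-P)\cdot \nu_F - (y-P)\cdot \nu_F = (x-y)\cdot\nu_F = 0$. Consequently, on each interior side $F$ of $\Tcal$ the contribution factors as $((x-P)\cdot\nu_F)\, |F|\, \intmean_F \jump{v\nc}_F \ds$, which vanishes by the defining mean-zero jump property of $\Hnc$. On the sides of $\partial T$ that contain $P$, the vector $x-P$ is tangent to the face so the integrand is pointwise zero, while on the opposite face $E$ the constant value of $(x-P)\cdot \nu_T$ is the height from $P$ to the hyperplane spanned by $E$, equal to $n|T|/|E|$.

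Putting these pieces together yields
\begin{equation*}
 n\int_T v\nc \dx + \int_T (x-P)\cdot \nablanc v\nc \dx = \frac{n|T|}{|E|}\int_E v\nc \ds,
\end{equation*}
and dividing by $n|T|$ gives the claimed identity. The only point that requires a moment of care is the constancy of $(x-P)\cdot \nu_F$ on each side $F$; once this is identified, the $\Hnc$-hypothesis matches exactly what is needed to discard all interior-side contributions without any further regularity on $v\nc$.
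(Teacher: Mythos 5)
Your argument is correct and coincides with the paper's proof: the same piecewise Gauss divergence theorem applied to $(x-P)v\nc$, the same observation that $(x-P)\cdot\nu_F$ is constant on each flat side (hence the mean-zero jump condition annihilates the interior-side terms and the faces through $P$ contribute nothing), and the same identification $(x-P)\cdot\nu_E = n\abs{T}/\abs{E}$ on $E$. No gaps.
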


\begin{proof}
The proof is a generalization of the continuous trace identity \cite{CGedickeRim}. Let $\Ecal(\operatorname{int}(T))$ the interior sides with respect to the triangulation $\Tcal$. The identity $\ddiv\nc((\pbullet-P) v\nc) = n v\nc + (\pbullet-P)\cdot \nablanc v\nc$, where \((\pbullet-P)(x)=(x-P)\) for \(x\in T\), a piecewise application of the Gau\ss{} divergence theorem, and the definition of the normal jumps \(\jump{ v\nc}_F \cdot \nu_F = v\nc|_{T_+} \nu_{T_+} + v\nc|_{T_-} \nu_{T_-}\) for \(F=\partial T_+\cap \partial T_-\), \(T_\pm\in\Tcal\), lead to 
\begin{align*}
 n \int_T v\nc\dx  &+ \int_T (x-P)\cdot \nablanc v\nc \dx = \sum_{F\in\Ecal(\operatorname{int}(T))} \int_F \jump{ v\nc}_F (x-P)\cdot \nu_F \ds\\
 &+ \sum_{F\in\Ecal(T)\setminus\{E\}} \int_F  v\nc (x-P)\cdot \nu_F \ds
+ \int_{E} v\nc (x-P)\cdot \nu_{E} \ds.
\end{align*}
The observation of $(x-P)\cdot \nu_F \equiv c_F\in \R$ on any $F\in \Ecal(\operatorname{int}(T))$, $(x-P)\cdot \nu_{F}\equiv 0$ on $F\in\Ecal(T)\setminus\{E\}$ and $(x-P)\cdot \nu_{E} = \dist(P,E)=n\abs{T}/\abs{E}$ on $E$ conclude the proof.
\end{proof}

\begin{lemma}\label{lem:normIdMinusP}
 Any $n$-simplex $T$ with vertex $P\in \Ncal(T)$ and the identity mapping $\pbullet$ (i.e. \((\pbullet-P)(x)=x-P\) for \(x\in T\)) satisfy
\begin{equation*}
 \NormLtwo{\pbullet - P}{T} \leq \sqrt{\frac{n}{n+2}} h_T \abs{T}\poh.
\end{equation*}
\end{lemma}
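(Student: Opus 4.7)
The plan is to parameterize $T$ as a cone with apex at $P$ and base the opposite face $E\in\Ecal(T)$. Concretely, every $x\in T$ can be written uniquely as $x=P+t(y-P)$ for some $y\in E$ and $t\in[0,1]$, so $(y,t)\mapsto P+t(y-P)$ is a bijection from $E\times[0,1]$ onto $T$ (up to a set of measure zero). A direct column-replacement calculation for the Jacobian of this map gives the volume element $dx=t^{n-1}\,\dist(P,E)\,dy\,dt$, where $dy$ is the $(n-1)$-dimensional surface measure on $E$. This is consistent with the cone formula $\abs{T}=\dist(P,E)\,\abs{E}/n$.

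With this change of variables and the trivial identity $\abs{x-P}^2=t^2\abs{y-P}^2$, I would compute
\begin{equation*}
 \int_T \abs{x-P}^2\,dx = \dist(P,E)\int_0^1 t^{n+1}\,dt\int_E \abs{y-P}^2\,dy = \frac{\dist(P,E)}{n+2}\int_E\abs{y-P}^2\,dy.
\end{equation*}
Since $E\subseteq T$, every $y\in E$ satisfies $\abs{y-P}\leq\diam(T)=h_T$, and therefore $\int_E\abs{y-P}^2\,dy\leq h_T^2\abs{E}$. Combining this with $\dist(P,E)\,\abs{E}=n\abs{T}$ yields
\begin{equation*}
 \int_T\abs{x-P}^2\,dx \leq \frac{h_T^2\,\dist(P,E)\,\abs{E}}{n+2} = \frac{n}{n+2}\,h_T^2\,\abs{T},
\end{equation*}
and taking square roots gives the asserted bound.

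The only nontrivial step is the volume-element formula; everything else is pointwise geometry. Writing $e_t=y-P$ and $e_j=t\,\partial_{u_j}y$ (for a local parameterization of $E$) and pulling out the factor $t^{n-1}$ from the $n-1$ tangential columns reduces the Jacobian determinant to that of the basis $\{y-P,\partial_{u_1}y,\dots,\partial_{u_{n-1}}y\}$, whose absolute value equals $\dist(P,\operatorname{aff}(E))$ times the surface Jacobian on $E$; this is the standard identity behind the cone formula. Alternatively one could avoid the Jacobian entirely by invoking the known moment formula $\int_T\abs{x-P}^2\,dx=\frac{\abs{T}}{(n+1)(n+2)}\sum_{i\leq j}(v_i-P)\cdot(v_j-P)$ over the vertices $v_i$ of $T$ and bounding each term by $h_T^2$, but the cone argument is both shorter and closer in spirit to the preceding discrete trace identity.
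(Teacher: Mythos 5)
Your argument is correct and reaches the asserted constant $\sqrt{n/(n+2)}$, but it follows a genuinely different route from the paper. The paper works algebraically: writing $x-P=\sum_{j=1}^{n}\lambda_j P_j$ in barycentric coordinates (after translating $P$ to the origin) and invoking the exact moment formula $\int_T\lambda_j\lambda_k\dx=\abs{T}(1+\delta_{jk})/((n+1)(n+2))$, it obtains $\NormLtwo{\pbullet-P}{T}^2=\bigl(\sum_{j,k=1}^{n}P_j\cdot P_k+\sum_{j=1}^{n}\abs{P_j}^2\bigr)\abs{T}/((n+1)(n+2))$ and then applies Cauchy's inequality and $\abs{P_j}\leq h_T$. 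Your cone parameterization $x=P+t(y-P)$ with $\dif x=t^{n-1}\dist(P,E)\,\dif y\,\dif t$ is more geometric and arguably more self-contained: the only inputs are the Jacobian of the radial map and the cone identity $\dist(P,E)\abs{E}=n\abs{T}$, which the paper already uses in Lemma~\ref{lem:discreteTraceIdentity}. What the paper's algebraic version buys is the explicit quadratic form in the vertices, which is reused later (in the discrete nonconforming interpolation theorem) with $P$ replaced by the centroid $M$, where $\sum_{j,k=1}^{n+1}P_j\cdot P_k=0$ yields the sharper constant $h_K^2\abs{K}/((n+1)(n+2))$; your cone argument from a vertex does not directly give that refinement, as you would have to decompose $T$ into $n+1$ cones over the faces and would again only recover the cruder bound. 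Two minor points for the write-up: state explicitly that $\dist(P,E)$ means the distance to the affine hull of $E$ (the height), consistent with the paper's usage, and note that the non-uniqueness of the representation at $x=P$ is indeed only a null set, as you already indicate.
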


\begin{proof}
 Let $\lambda_1,\dots,\lambda_{n+1}\in P_1(T)$ be the barycentric coordinates of the $n$-simplex $T=\conv(P_1,\dots,P_{n+1})$. Without loss of generality, assume $P=P_{n+1}=0$. The identity $x=\sum_{j=1}^{n+1} \lambda_j(x) P_j$ implies
\begin{align*}
 \NormLtwo{\pbullet - P}{T}^2 &= \NormLtwo{\sum_{j=1}^{n} \lambda_j P_j}{T}^2 = \sum_{j,k=1}^{n} P_j\cdot P_k \int_T \lambda_j \lambda_k \dx\\
 &=  (\sum_{j,k=1}^{n} P_j\cdot P_k + \sum_{j=1}^{n} \abs{P_j}^2 )  \abs{T}/((n+1)(n+2))
\end{align*}
with the integration formula for the barycentric coordinates $\int_T \lambda_j \lambda_k \dx = \abs{T} (1+\delta_{jk})/((n+1)(n+2))$. The Cauchy inequality and $\abs{P_j}\leq h_T$ lead to the assertion.
\end{proof}

\section{Discrete Poincar\'e Inequality}\label{sec:discretePoincare}
This section establishes a discrete Poincar\'e inequality on an $n$-simplex $K\subseteq\R^n$ with a constant $C(n)=((4M(n)-3)/(3n(n+2)))\poh$ with $M(n)$ from Remark \ref{rem:bisectDiam} and so $C(2)=\sqrt{3/8}$ and $C(3)=\sqrt{5}/3$.
\begin{theorem}[Discrete Poincar\'e inequality]\label{thm:discPoin}
 Let $K$ be an $n$-simplex and $\Tcal\in \Admis(\{K\})$ be a regular triangulation of $K$. Then any $v\nc\in\Hnc$ satisfies
\begin{equation*}
 \NormLtwoK{v\nc - \intmean_K v\nc\dx} \leq C(n) \hk \NormEnergyncK{v\nc}.
\end{equation*}
\end{theorem}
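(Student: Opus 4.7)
The plan is to proceed by induction on the bisection tree underlying $\Tcal\in\Admis(\{K\})$, combining three ingredients. The first is the Pythagorean decomposition
\begin{equation*}
\NormLtwoK{v\nc-\bar{v}_K}^2 = \sum_{T\in\Tcal}\NormLtwoT{v\nc-\bar{v}_T}^2 + \sum_{T'\in\mathcal{B}(\Tcal)}|T'|\,\frac{(\bar{v}_{T'_1}-\bar{v}_{T'_2})^2}{4},
\end{equation*}
where $\bar{v}_\omega:=\intmean_\omega v\nc\dx$, $\mathcal{B}(\Tcal)$ denotes the set of internal nodes of the bisection tree of $\Tcal$, and $T'_1,T'_2$ are the two children of $T'\in\mathcal{B}(\Tcal)$. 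The second is a continuous Poincar\'e inequality on each leaf simplex (the base case $\Tcal=\{K\}$), which I would derive from Lemmas~\ref{lem:discreteTraceIdentity} and~\ref{lem:normIdMinusP}. The third is a ``means-jump'' estimate across each bisection interface.

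For the means-jump estimate on a simplex $T'$ bisected into children $T'_1,T'_2$ sharing a face $F$ with respective opposite vertices $P_1,P_2$, I would apply Lemma~\ref{lem:discreteTraceIdentity} in each child; the NC property $\int_F \jump{v\nc}_F\ds=0$ cancels the face-average terms and yields
\begin{equation*}
\bar{v}_{T'_1}-\bar{v}_{T'_2} = \tfrac{1}{n}\intmean_{T'_2}(x-P_2)\cdot\nablanc v\nc\dx - \tfrac{1}{n}\intmean_{T'_1}(x-P_1)\cdot\nablanc v\nc\dx.
\end{equation*}
The Cauchy--Schwarz inequality and Lemma~\ref{lem:normIdMinusP} (using $|T'_j|=|T'|/2$) then give
\begin{equation*}
|T'|\,\frac{(\bar{v}_{T'_1}-\bar{v}_{T'_2})^2}{4} \le \frac{h_{T'_1}^2\NormEnergync{v\nc}{T'_1}^2 + h_{T'_2}^2\NormEnergync{v\nc}{T'_2}^2}{n(n+2)}.
\end{equation*}

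Combining the three ingredients, expanding $\NormEnergync{v\nc}{T'_j}^2=\sum_{T\in\Tcal,\,T\subseteq T'_j}\NormEnergync{v\nc}{T}^2$, and interchanging the order of summation, I would rearrange the total bound as a sum over leaves $T\in\Tcal$ in which each $\NormEnergync{v\nc}{T}^2$ carries the chain coefficient $c_P(n)^2 h_T^2+\tfrac{1}{n(n+2)}\sum_{l=1}^{\ell(T)}h_{T_l}^2$ over the ancestors $T_l$ of $T$ at level $l$. By Remark~\ref{rem:bisectDiam}, $h_{T_l}\le h_K/2^{\lfloor l/M(n)\rfloor}$, and the geometric sum $\sum_{l\ge1}4^{-\lfloor l/M(n)\rfloor}=(4M(n)-3)/3$ bounds the chain sum uniformly by $(4M(n)-3)h_K^2/3$. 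Together with the leaf contribution, this produces the stated constant $C(n)^2=(4M(n)-3)/(3n(n+2))$.

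The main obstacle is the precise matching of the leaf Poincar\'e constant with the chain-sum bound. One needs the continuous Poincar\'e constant on a simplex to satisfy $c_P(n)^2\le 1/(n(n+2))$, so that the leaf term $c_P(n)^2 h_T^2$ absorbs into the chain sum as if extending it by one virtual level, while preserving the geometric-series estimate. Extracting this sharpened Poincar\'e bound from Lemmas~\ref{lem:discreteTraceIdentity} and~\ref{lem:normIdMinusP} on a general simplex, and verifying that the virtual extension is compatible with the specific halving pattern of Remark~\ref{rem:bisectDiam}, are the delicate steps of the argument.
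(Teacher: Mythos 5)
Your tree decomposition and the means-jump estimate across each bisection face are both correct, and together they are in substance the paper's Lemma~\ref{lem:estimateDistBisec} unrolled over the whole bisection tree; the geometric-series bound $\sum_{\ell\ge 1}h_\ell^2\le(4M-3)h_K^2/3$ is likewise exactly the paper's estimate \eqref{eqn:geometricSeriesH}. The gap is precisely the step you flag as delicate. Stopping the recursion at the leaves of $\Tcal$ forces you to pay a term $\cp^2h_T^2$ per leaf $T\in\Tcal$, and absorbing it into the chain budget requires a quantitative continuous Poincar\'e constant of roughly $\cp^2\le 1/(n(n+2))$ on an arbitrary simplex. This is not obtainable from Lemmas~\ref{lem:discreteTraceIdentity}--\ref{lem:normIdMinusP}, which control differences of integral means rather than $\NormLtwoT{v-\intmean_T v\dx}$; the paper's own elementary bound (Theorem~\ref{thm:Poin}, itself proved by the same recursive device) gives only $\cp^2\le 1/6>1/8$ for $n=2$, and for $n=3$ the requirement $\cp^2\le 1/15$ fails even for the sharp Payne--Weinberger constant $1/\pi$ on thin tetrahedra. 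So, as written, the leaf term cannot be absorbed with the tools at hand and the claimed constant does not follow.

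The paper's resolution is to never stop at $\Tcal$: Lemma~\ref{lem:estimateDistBisec} applies to every admissible simplex $T\in\bigcup\Admis(\{K\})$, because if $T$ lies inside a leaf of $\Tcal$ then $v\nc|_T\in H^1(T)$ and the jump across the bisection face vanishes, while otherwise $\Tcal(T)$ triangulates $T$ and the nonconforming jump condition holds. One therefore continues the uniform bisection to an arbitrary level $L\ge\max_{T\in\Tcal}\ell(T)$ and applies the continuous Poincar\'e inequality only on the level-$L$ simplices, which contributes $\cp^2h_L^2\NormEnergyncK{v\nc}^2$. Letting $L\to\infty$ makes this term vanish, so only the \emph{existence} of $\cp$ is used and the final constant is exactly $(n(n+2))\inv\sum_{\ell\ge1}h_\ell^2\le C(n)^2h_K^2$ with no leaf contribution. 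Replacing your leaf Poincar\'e step by this limiting argument repairs the proof and recovers the stated constant.
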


The proof of this theorem utilizes a distance function
\begin{equation*}
 \distSq{f}{T} := \NormLtwo{f-\intmean_T f\dx}{T}^2
\end{equation*}
and its behavior under bisection for any $f\in \Ltwo{T}$ in an $n$-simplex $T\ins \R^n$.

\begin{lemma}\label{lem:estimateDistBisec}
 Let $\,\Tcal\in\Admis(\{K\})$, $T\in\cup\Admis(\{K\})$, and $\{T_1,T_2\}=\bisec(T)$. Then any $v\nc\in\Hnc$ satisfies
\begin{equation*}
 \distSq{v\nc}{T} \leq (n(n+2))\inv\max_{j=1,2}{h_{T_j}^2} \NormEnergyncT{v\nc}^2 + \sum_{j=1,2} \distSq{v\nc}{T_j}.
\end{equation*}

\end{lemma}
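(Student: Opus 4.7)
The strategy is to use the $L^2$-orthogonality of the integral mean and reduce the estimate to controlling the difference $\intmean_{T_1} v\nc \dx - \intmean_{T_2} v\nc \dx$ via Lemma~\ref{lem:discreteTraceIdentity}.

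\emph{Splitting.} Since $\intmean_{T_j}\dx$ is the $L^2(T_j)$-projection onto constants, for any $c \in \R$ it holds that
\[
 \distSq{v\nc}{T} \leq \NormLtwo{v\nc - c}{T}^2 = \sum_{j=1,2} \Bigl( \distSq{v\nc}{T_j} + \abs{T_j} \bigl(\intmean_{T_j} v\nc \dx - c\bigr)^2 \Bigr).
\]
With $\abs{T_1} = \abs{T_2} = \abs{T}/2$, the minimiser $c := \tfrac{1}{2}(\intmean_{T_1} v\nc \dx + \intmean_{T_2} v\nc \dx)$ reduces the remainder sum to $\tfrac{\abs{T}}{4}(\intmean_{T_1} v\nc \dx - \intmean_{T_2} v\nc \dx)^2$, so it suffices to bound this term by $(n(n+2))\inv \max_j h_{T_j}^2 \NormEnergyncT{v\nc}^2$.

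\emph{Trace identity.} Let $F := T_1 \cap T_2$ denote the face created by bisecting $T$, so that $P_1$ (resp.\ $P_{n+1}$) is the vertex of $T_1$ (resp.\ $T_2$) opposite $F$. Applying Lemma~\ref{lem:discreteTraceIdentity} to $T_1$ with vertex $P_1$ and opposite face $F$, doing the same for $T_2$ with vertex $P_{n+1}$, and subtracting leads to
\[
 \intmean_{T_1} v\nc \dx - \intmean_{T_2} v\nc \dx = \frac{1}{n}\Bigl( \intmean_{T_2}(x - P_{n+1}) \cdot \nablanc v\nc \dx - \intmean_{T_1}(x - P_1) \cdot \nablanc v\nc \dx \Bigr).
\]
The two $\intmean_F v\nc \ds$ terms cancel because NVB bisects $T$ along $F$ before any further subdivision of $T_1$ or $T_2$; hence $F$ is a union of interior sides of $\Tcal(T)$, each carrying a zero-integral jump, so $\intmean_F v\nc \ds$ agrees from both sides of $F$ (trivially so if $T \ins K'$ for some $K' \in \Tcal$, where $v\nc$ is smooth across $F$).

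\emph{Bounding the gradient integrals.} The elementary bound $(a - b)^2 \leq 2(a^2 + b^2)$, the Cauchy--Schwarz inequality inside each integral mean, Lemma~\ref{lem:normIdMinusP} for $\NormLtwo{\pbullet - P_1}{T_1}$ and $\NormLtwo{\pbullet - P_{n+1}}{T_2}$, and $\abs{T_j} = \abs{T}/2$ yield
\[
 \bigl(\intmean_{T_1} v\nc \dx - \intmean_{T_2} v\nc \dx\bigr)^2 \leq \frac{2}{n^2}\sum_{j=1,2} \frac{n\, h_{T_j}^2}{(n+2)\abs{T_j}}\NormLtwo{\nablanc v\nc}{T_j}^2 \leq \frac{4 \max_j h_{T_j}^2}{n(n+2)\abs{T}}\NormEnergyncT{v\nc}^2.
\]
Multiplying by $\abs{T}/4$ and inserting the result into the splitting identity concludes the proof. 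The only delicate point is the cancellation of the $\intmean_F v\nc \ds$ terms when $T$ is not itself an element of $\Tcal$; this is resolved by the NVB refinement structure described above, which guarantees that $F$ decomposes into interior sides of $\Tcal(T)$ inheriting the $\Hnc$ zero-mean jump property.
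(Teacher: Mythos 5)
Your proof is correct and follows essentially the same route as the paper: orthogonal splitting of $\distSq{v\nc}{T}$ over the two children, reduction to $\abs{\ol v_1-\ol v_2}^2$, the discrete trace identity on each $T_j$ with the common face $F$, and Lemma~\ref{lem:normIdMinusP}, yielding the same constant. The only cosmetic difference is that you cancel the two $\intmean_F v\nc\ds$ terms by subtracting the trace identities, whereas the paper bounds each $\abs{\ol v_j-v_F}$ separately and combines them with the triangle inequality; your justification that $F$ decomposes into interior sides of $\Tcal(T)$ with zero-mean jumps (or lies inside a single $K'\in\Tcal$) matches the paper's case distinction.
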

\begin{proof}
Let $F:= \dell T_1 \cap \dell T_2$ and $P_1,P_2\in\Ncal(T)$ with $T_j=\conv\{F,P_j\}$ for $j=1,2$. Since $T\in\cup\Admis(\{K\})$ and $\Tcal\in\Admis(\{K\})$, it holds either $T\ins \hat T \in \Tcal$ for some $\hat T \in \Tcal$ or $\Tcal(T)$ is a regular triangulation of $T$. Hence, $v\nc\in\Hnc$ implies $\int_F \jump{v\nc}_F \ds = 0$ in both cases and $v_F:= \intmean_F v\nc \ds$ is well-defined.
Similarly, for $j=1,2$, either $T_j\ins \hat T_j \in \Tcal$ for some $\hat T_j \in \Tcal$ or $\Tcal(T_j)$ is a regular triangulation of $T_j$. Therefore, $v\nc|_{T_j}\in H^1(T_j)$ or $v\nc|_{T_j} \in H^1\nc(\Tcal(T_j))$ and thus, Lemma \ref{lem:discreteTraceIdentity} is applicable on $T_1$ and $T_2$. With $\ol{v}_j:= \intmean_{T_j} v\nc \dx$ for $j=1,2$, the Cauchy Schwarz inequality and Lemma \ref{lem:normIdMinusP} imply
\begin{align*}
 n\abs{\ol{v}_j - v_F} &=  \Big\lvert \intmean_{T_j} (x-P_j)\cdot \nablanc v\nc\dx \Big\rvert\\
&\leq \NormLtwo{\pbullet - P_j}{T_j} \NormEnergync{v\nc}{T_j} /\abs{T_j}\\
&\leq \frac{\sqrt{n} h_{T_j}}{\sqrt{(n+2)\abs{T_j}}}\NormEnergync{v\nc}{T_j}.\numberthis\label{eqn:vjminusvf}
\end{align*}
With $\ol{v}:=\intmean_T v\nc \dx = (\ol{v}_1+\ol{v}_2)/2$, the triangle inequality yields
\begin{equation*}
 \sum_{j=1,2} \abs{\ol{v}-\ol{v}_j}^2 = \abs{\ol{v}_1 - \ol{v}_2}^2/2 \leq \sum_{j=1,2} \abs{\ol{v}_j - v_F}^2.
\end{equation*}
This, the orthogonality of $v\nc-\ol{v}_j$ onto $\ol{v}-\ol{v}_j$ in $\Ltwo{T_j}$, and $\abs{T_1}=\abs{T_2}$ show
\begin{align*}
 \distSq{v\nc}{T} &= \NormLtwo{v\nc -  \ol{v}}{T_1}^2 + \NormLtwo{v\nc -  \ol{v}}{T_2}^2\\
&= \sum_{j=1,2} \big( \NormLtwo{v\nc -  \ol{v}_j}{T_j}^2 + \NormLtwo{\ol{v} -  \ol{v}_j}{T_j}^2\big)\\
&\leq \sum_{j=1,2} \big( \NormLtwo{v\nc -  \ol{v}_j}{T_j}^2 + \abs{T_j} \abs{\ol{v}_j- v_F}^2\big).
\end{align*}
The combination with \eqref{eqn:vjminusvf} concludes the proof.
\end{proof}

\begin{proofof}[of Theorem \ref{thm:discPoin}]
Let $\Tcal_0 := \{K\}$ and $\Tcal_\ell := \bisec^{(\ell)}(\Tcal_0)\in \Admis(\Tcal_0)$ for any $\ell\in\N_0$. For any multiindex $\alpha=(\alpha_1,\dots,\alpha_\ell)\in \{1,2\}^\ell$ of length $\dim\alpha=\ell\in\N_0$, define the $n$-simplex $K_\alpha$ recursively by $K_\emptyset := K$ and $\{K_{(\alpha,1)},K_{(\alpha,2)}\} = \bisec(K_\alpha)$ for extended multiindices $(\alpha,1)$ and $(\alpha,2)$ in $\{1,2\}^{\ell+1}$. This implies $\Tcal_\ell =\{K_\alpha\,|\, \dim\alpha=\ell\}$ and $h_\ell:= \max_{T\in\Tcal_\ell} h_T$ satisfies $h_{\ell+1}\leq h_\ell$. Remark \ref{rem:bisectDiam} shows that any $\ell\in\N_0$ satisfies $h_{\ell+M}\leq h_\ell/2$ for fixed $M=M(n)\in \N$, thus $h_{kM}\leq h_0 2^{-k}$ for $k\in \N_0$. This implies
\begin{equation}\label{eqn:geometricSeriesH}
 \sum_{\ell=0}^{\infty} h_\ell^2 = \sum_{k=0}^\infty \sum_{\ell=kM}^{(k+1)M-1} h_\ell^2 \leq M \sum_{k=0}^\infty h_{kM}^2 \leq M \sum_{k=0}^\infty h_{0}^2 2^{-2k} = 4 M h_{0}^2 /3.
\end{equation}
With $\distAlpha{\alpha}:=\distSq{v\nc}{K_\alpha}$ for any $\ell\in\N_0$ and any $\alpha\in\{1,2\}^\ell$, Lemma \ref{lem:estimateDistBisec} and the abbreviation $\gamma := (n(n+2))\inv$ show
\begin{equation*}
 \distAlpha{\alpha} \leq \gamma h^2_{\dim\alpha+1} \NormEnergync{v\nc}{K_\alpha}^2 + \sum_{j=1,2} \distAlpha{(\alpha,j)}.
\end{equation*}
The sum over all multiindices of length $k\in\N_0$ reads
\begin{equation*}
 \sum_{\alpha\in\{1,2\}^k} \distAlpha{\alpha} \leq \gamma h^2_{k+1} \NormEnergync{v\nc}{K}^2 + \sum_{\beta\in\{1,2\}^{k+1}} \distAlpha{\beta}.
\end{equation*}
Successive applications of this result and any choice of $L\geq \max_{T\in\Tcal} \ell(T)$ lead to
\begin{align*}
 \distAlpha{\emptyset} &\leq \gamma h^2_{1} \NormEnergync{v\nc}{K}^2 + \sum_{\ell =1,2} \distAlpha{\ell}\\
&\leq \gamma (h^2_{1}+ h^2_2) \NormEnergync{v\nc}{K}^2 + \sum_{\alpha\in\{1,2\}^2} \distAlpha{\alpha}\\
&\leq \dots \leq \gamma \big(\sum_{\ell=1}^L h_\ell^2\big) \NormEnergync{v\nc}{K}^2 + \sum_{\alpha\in\{1,2\}^L} \distAlpha{\alpha}.\numberthis\label{eqn:succEstimate}
\end{align*}
Since $L\geq \max_{T\in\Tcal} \ell(T)$, $\Tcal_L=\bisec^{(L)}(K)$ is finer than $\Tcal$. Therefore $v\nc|_{K_\alpha} \in H^1(K_\alpha)$ for $\dim\alpha\geq L$ and the Poincar\'e inequality shows
\begin{equation*}
 \distAlpha{\alpha} = \NormLtwo{v\nc - \intmean_{K_\alpha} v\nc\dx}{K_\alpha}^2 \leq \cp^2 h_{\dim\alpha}^2 \NormEnergy{v\nc}_{K_\alpha}^2.
\end{equation*}
Thus, any $L\geq \max_{T\in\Tcal} \ell(T)$ satisfies
\begin{equation*}
 \sum_{\alpha\in\{1,2\}^L} \distAlpha{\alpha} \leq h_L^2 \cp^2 \NormEnergyncK{v\nc}^2.
\end{equation*}
The combination of this result with \eqref{eqn:geometricSeriesH} -- \eqref{eqn:succEstimate} yields
\begin{align*}
 \NormLtwoK{v\nc -\intmean_K v\nc\dx}^2 &\leq  \gamma (\sum_{\ell=1}^L h_\ell^2) \NormEnergync{v\nc}{K}^2 + h_L^2 \cp^2 \NormEnergyncK{v\nc}^2\\
&\leq (\gamma (\sum_{\ell=0}^\infty h_\ell^2 - h_0^2) + h_L^2 \cp^2) \NormEnergync{v\nc}{K}^2\\
&\leq (\gamma (4M-3)h_K^2/3 + h_L^2 \cp^2) \NormEnergync{v\nc}{K}^2
\end{align*}
The passage to the limit $L\rightarrow \infty$ and $h_L\rightarrow 0$ concludes the proof with $C(n)^2=(4M-3)/(3n(n+2))$.
\end{proofof}

The remainder of this section is devoted to an alternative proof of the Poincar\'e inequality in \(2\)D in the continous case with suboptimal constant $6^{-1/2}$. The proof utilizes the techniques of the previous proof with red-refinement instead of bisection for a slightly better constant. Note that the proofs of Theorem \ref{thm:discPoin} and \ref{thm:Poin} utilize only the existence of a Poincar\'e constant $\cp$, with neither its value nor its optimality. Compared to the optimal constant $1/j_{1,1}\approx 0.26$ in \(2\)D \cite{laugesensiudeja}, the suboptimal constant $6^{-1/2}\approx 0.41$ of Theorem \ref{thm:Poin} is competitive although it utilizes elementary tools.

\begin{theorem}[Poincar\'e inequality]\label{thm:Poin}
 Let $K\subseteq \R^2$ be a triangle and $\Tcal\in \Admis(K)$ a regular triangulation of $K$. Then any $v\in H^1(K)$ satisfies
\begin{equation*}
 \NormLtwoK{v - \intmean_K v\dx} \leq \hk/ \sqrt{6} \NormEnergyOn{v}{K}.
\end{equation*}
\end{theorem}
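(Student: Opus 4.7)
The plan is to mirror the proof of Theorem~\ref{thm:discPoin}, but with uniform \emph{red}-refinement of $K$ into four congruent subtriangles in place of newest vertex bisection. Since $v \in H^1(K)$ is a classical Sobolev function, no interior jump terms appear and the trace identity of Lemma~\ref{lem:discreteTraceIdentity} applies without modification on every subtriangle, which removes one technical layer from the bisection argument.

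First, I establish a one-step analogue of Lemma~\ref{lem:estimateDistBisec}: for any triangle $T$ with red-refinement $\{T_1,T_2,T_3,T_4\}$, where $T_1,T_2,T_3$ are the corner triangles at the vertices $P_1,P_2,P_3$ of $T$ and $T_4$ is the medial triangle,
\[
 \distSq{v}{T} \leq \gamma\,h_T^2\,\NormEnergyOn{v}{T}^2 + \sum_{j=1}^4 \distSq{v}{T_j}.
\]
The starting point is the orthogonal decomposition
\[
 \distSq{v}{T} = \sum_{j=1}^4 \distSq{v}{T_j} + \frac{|T|}{4}\sum_{j=1}^4\bigl(\bar v - \bar v_j\bigr)^2,
\]
with $\bar v := \intmean_T v \dx$ and $\bar v_j := \intmean_{T_j} v \dx$. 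For each interior edge $F_j := \dell T_j \cap \dell T_4$, the common mean $v_{F_j}:=\intmean_{F_j} v \ds$ is well defined, and Lemma~\ref{lem:discreteTraceIdentity} applied to $T_j$ at vertex $P_j$ and to $T_4$ at the vertex $Q_j$ opposite to $F_j$ produces two identities whose difference expresses $\bar v_j - \bar v_4$ as a single gradient integral against a piecewise linear weight function. Cauchy--Schwarz together with Lemma~\ref{lem:normIdMinusP} on $T_j$ and $T_4$ (both of diameter $h_T/2$) bounds $(\bar v_j - \bar v_4)^2$ by a fixed multiple of $h_T^2\bigl(\NormEnergyOn{v}{T_j}^2 + \NormEnergyOn{v}{T_4}^2\bigr)/|T|$. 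The identity $\bar v - \bar v_j = (\bar v_4 - \bar v_j) + \tfrac{1}{4}\sum_{k=1}^{3}(\bar v_k - \bar v_4)$ and the analogous expression for $\bar v - \bar v_4$ then reduce the full sum $\sum_{j=1}^{4}(\bar v - \bar v_j)^2$ to the three differences $\bar v_j - \bar v_4$ and yield the one-step estimate.

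Iterating this on the nested red-refinements $\Tcal_\ell := \red^{(\ell)}(\{K\})$ in exactly the telescoping fashion of \eqref{eqn:succEstimate}, and exploiting that $h_\ell = 2^{-\ell}h_K$ gives the clean geometric sum $\sum_{\ell=0}^\infty h_\ell^2 = (4/3)h_K^2$, one arrives at
\[
 \distSq{v}{K} \leq \gamma\Bigl(\sum_{\ell=0}^{L-1} h_\ell^2\Bigr)\NormEnergyOn{v}{K}^2 + \sum_{\alpha\in\{1,\dots,4\}^L}\distSq{v}{K_\alpha}.
\]
The continuous Poincar\'e inequality on each leaf (with \emph{any} finite constant $\cp$, its value being irrelevant) bounds the remainder by $\cp^2 h_L^2 \NormEnergyOn{v}{K}^2$, which vanishes as $L\to\infty$. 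The main obstacle is calibrating $\gamma$: to obtain the announced constant $1/\sqrt{6}$ one needs $\gamma = 1/8$, which is achievable only if the trace-identity estimates for the three pairs $(T_j,T_4)$ are combined sharply---in particular, differences between non-adjacent corner triangles must \emph{not} be routed through $T_4$ via a lossy triangle inequality $(a+b)^2\leq 2(a^2+b^2)$, as this overweights the medial triangle's gradient contribution and yields a suboptimal $\gamma$ that falls short of $1/8$.
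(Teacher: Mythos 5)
Your outline follows the paper's proof of Theorem~\ref{thm:Poin} essentially step for step: red-refinement, the orthogonal split of $\distSq{v}{T}$ into the four $\distSq{v}{T_j}$ plus the mean-value deviations, the reduction of $\sum_{j=1}^4(\bar v-\bar v_j)^2$ to the three differences $\bar v_j-\bar v_4$ (the paper gets this via $\min_{x\in\R}\sum_j(\bar v_j-x)^2\le\sum_{j=1}^3(\bar v_j-\bar v_4)^2$; your explicit identity gives exactly the same bound), the trace identity of Lemma~\ref{lem:discreteTraceIdentity} combined with Lemma~\ref{lem:normIdMinusP} on each pair $(T_j,T_4)$ through the shared edge mean $w_j=\intmean_{F_j}v\ds$, and the telescoping over $\Tcal_\ell=\red^{(\ell)}(\{K\})$ with $\sum_\ell h_\ell^2\le\tfrac43h_K^2$.

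The one place where the proposal does not close is exactly the step you flag as "the main obstacle," and as written your intermediate bound does not reach $\gamma=1/8$. Bounding $(\bar v_j-\bar v_4)^2$ by a fixed (equal-weight) multiple of $\NormEnergyOn{v}{T_j}^2+\NormEnergyOn{v}{T_4}^2$ is the lossy route: since $\NormEnergyOn{v}{T_4}^2$ then appears three times (once per $j$) with the same weight as each corner term, one lands at $\gamma=3/16$ and hence the constant $1/2$ rather than $1/\sqrt6$. The paper's fix is a weighted Young inequality on the split $\bar v_j-\bar v_4=(\bar v_j-w_j)+(w_j-\bar v_4)$, namely $(a+b)^2\le 4a^2+\tfrac43 b^2$ (i.e.\ $\lambda=3$), chosen precisely so that the threefold medial contribution $3\cdot\tfrac43\cdot(w_j-\bar v_4)^2$ carries the same total weight as each single corner contribution $4\cdot(\bar v_j-w_j)^2$; with $n=2$, $|T_j|=|T|/4$ and the bound $n\abs{\bar v_j-w_j}\le \sqrt{n}\,h_{T_j}\NormEnergyOn{v}{T_j}/\sqrt{(n+2)\abs{T_j}}$ this yields exactly $\tfrac12\max_jh_{T_j}^2\,\NormEnergyOn{v}{T}^2=\tfrac18 h_T^2\NormEnergyOn{v}{T}^2$. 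Your remark about "differences between non-adjacent corner triangles" is a red herring: no such differences ever arise, since every difference in the reduced sum is between a corner triangle and $T_4$ across their common edge; the only calibration needed is the asymmetric weighting above. Supply that inequality and the proof is complete and identical to the paper's.
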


The proof relies on the subsequent key lemma.

\begin{lemma}\label{lem:estimateDistRed}
 Any $v\in H^1(K)$ in a triangle $T\subseteq K$ and its red-refinement $\{T_1,T_2,T_3,T_4\}=\red(T)$ satisfy
\begin{equation*}
 \distSq{v}{T} \leq \max_{j=1,\dots,4}{h_{T_j}^2} \NormEnergyOn{v}{T}^2 /2 + \sum_{j=1}^4 \distSq{v}{T_j}.
\end{equation*}
\end{lemma}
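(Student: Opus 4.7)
The plan is to adapt the bisection argument of Lemma~\ref{lem:estimateDistBisec} to red-refinement, exploiting the geometric feature that the centroid of the central triangle $T_4$ coincides with the centroid $\bar P$ of $T$. First, $L^2$-orthogonality of $(v-\ol v_j)$ and $(\ol v_j - \ol v)$ on each $T_j$ (with $\ol v_j:=\intmean_{T_j}v\dx$ and $\ol v:=\intmean_T v\dx$) gives the Pythagoras-type identity
\begin{equation*}
 \distSq{v}{T} = \sum_{j=1}^4 \distSq{v}{T_j} + \sum_{j=1}^4 \abs{T_j}\,(\ol v_j - \ol v)^2.
\end{equation*}
Since $\abs{T_j}=\abs{T}/4$ and $\max_j h_{T_j} = h_T/2$, the task reduces to the variance estimate $\sum_{j=1}^4(\ol v_j - \ol v)^2 \leq h_T^2\,\NormLtwoT{\nabla v}^2/(2\abs{T})$.

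Let $a_j := \ol v_j - \ol v_4$ for $j\in\{1,2,3\}$ and $S:=a_1+a_2+a_3$. From $\ol v = \tfrac14\sum_k\ol v_k$ a short calculation yields
\begin{equation*}
\sum_{j=1}^4 (\ol v_j - \ol v)^2 = \sum_{j=1}^3 a_j^2 - \tfrac14 S^2.
\end{equation*}
For each $j\in\{1,2,3\}$ the corner triangle $T_j$ shares exactly one face $F_j$ with $T_4$; let $P_j$ and $M_j$ be the vertices of $T_j$ and $T_4$ opposite $F_j$, so that $M_j$ is the midpoint of the edge of $T$ opposite $P_j$. Applying Lemma~\ref{lem:discreteTraceIdentity} on both sides of $F_j$ gives
\begin{equation*}
 a_j = \tfrac12\intmean_{T_4}(x-M_j)\cdot\nabla v\dx - \tfrac12\intmean_{T_j}(x-P_j)\cdot\nabla v\dx =: \beta_j + \alpha_j,
\end{equation*}
and Cauchy--Schwarz together with Lemma~\ref{lem:normIdMinusP} bounds each of $\alpha_j^2,\beta_j^2$ by a multiple of $h_T^2/\abs{T}$ times the local squared gradient norm.

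The main obstacle is the sharp constant $1/2$: a direct estimate $(\alpha_j+\beta_j)^2\leq 2\alpha_j^2+2\beta_j^2$ followed by summation overcounts $\NormLtwo{\nabla v}{T_4}^2$ three times and yields an excess factor of three. The remedy exploits the centroid identity $M_1+M_2+M_3 = P_1+P_2+P_3 = 3\bar P$, so that
\begin{equation*}
 \sum_{j=1}^3\beta_j = \tfrac32\intmean_{T_4}(x-\bar P)\cdot\nabla v\dx;
\end{equation*}
since $\bar P$ is the centroid of $T_4$, the vanishing first moment $\int_{T_4}(x-\bar P)\dx = 0$ lets one bound $(\sum_j\beta_j)^2$ by the centroidal second moment of $T_4$, smaller by a factor six than the vertex moment. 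Combined with a weighted Cauchy--Schwarz $(X+Y)^2\leq (1+t)X^2+(1+t^{-1})Y^2$ for a weight $t$ chosen to balance the $T_j$- and $T_4$-contributions, the corrective term $-\tfrac14 S^2$ in the variance identity absorbs the remaining excess and delivers the claimed constant.
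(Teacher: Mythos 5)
Your setup coincides with the paper's: the Pythagoras identity, the reduction to the variance bound $\sum_{j=1}^4(\ol{v}_j-\ol{v})^2\leq h_T^2\NormLtwo{\nabla v}{T}^2/(2\abs{T})$, and the decomposition $a_j=\alpha_j+\beta_j$ obtained by applying Lemma~\ref{lem:discreteTraceIdentity} on both sides of $F_j$ are exactly what the paper does (there $\alpha_j=\ol{v}_j-w_j$ and $\beta_j=w_j-\ol{v}_4$ with $w_j=\intmean_{F_j}v\ds$). The genuine gap is in your final step. The corrective term $-\tfrac14 S^2$ enters with a \emph{negative} sign, so it can only be discarded: to let it ``absorb the remaining excess'' you would need a \emph{lower} bound on $S^2$, and none exists — $S$ vanishes whenever $\ol{v}_1+\ol{v}_2+\ol{v}_3=3\ol{v}_4$, while the $a_j$ and hence the excess from a suboptimal Young weight need not. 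For the same reason the centroid identity is of no help here: the vanishing first moment of $T_4$ yields an \emph{upper} bound on $(\sum_j\beta_j)^2$, which is useless for a term you are subtracting, and it says nothing about $\sum_j\beta_j^2$, which is what actually appears after squaring each $a_j$. As described, the mechanism cannot deliver the constant $1/2$.

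The repair is already hidden in your own phrase ``a weight $t$ chosen to balance'': compute it. Lemmas~\ref{lem:discreteTraceIdentity}--\ref{lem:normIdMinusP} with $n=2$ and $\abs{T_j}=\abs{T}/4$ give $\alpha_j^2\leq h_{T_j}^2\NormLtwo{\nabla v}{T_j}^2/(8\abs{T_j})$ and $\beta_j^2\leq h_{T_4}^2\NormLtwo{\nabla v}{T_4}^2/(8\abs{T_4})$. The weighted Young inequality $a_j^2\leq(1+t)\alpha_j^2+(1+t^{-1})\beta_j^2$ with $t=3$ yields, after multiplication by $\abs{T_j}=\abs{T}/4$ and summation over $j=1,2,3$, the contribution $\sum_{j=1}^3 \tfrac12 h_{T_j}^2\NormLtwo{\nabla v}{T_j}^2$ from the corner triangles plus three $T_4$-terms each worth $\tfrac16 h_{T_4}^2\NormLtwo{\nabla v}{T_4}^2$, i.e.\ exactly one full share $\tfrac12 h_{T_4}^2\NormLtwo{\nabla v}{T_4}^2$; the total is $\tfrac12\max_j h_{T_j}^2\NormEnergyOn{v}{T}^2$ with no excess left to absorb. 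This is precisely the paper's proof, which simply bounds $\min_{x\in\R}\sum_j(\ol{v}_j-x)^2$ by its value at $x=\ol{v}_4$ (equivalently, drops $-\tfrac14 S^2$) and never needs the centroid of $T_4$.
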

\begin{proof}
\begin{figure}\label{fig:redrefinement}
\caption{Red refinement of $T$}
\centering
  \begin{tikzpicture}[scale=3]
  %label={[label distance=<distance>]<angle>:<label text>}
  %label={[label distance=<distance>]<angle>:$Q_2$}
    \draw (-1,0) node[shape=coordinate,label=below left:$P_1$] (P1) {P1};
    \draw (1,0) node[shape=coordinate,label=below right:$P_2$] (P2) {P2};
    \draw (0,1) node[shape=coordinate,label=above:$P_3$] (P3) {P3};
    \draw ($(P1)!.5!(P2)$) node[shape=coordinate,label=below:$Q_3$] (Q3) {Q3};
    \draw ($(P1)!.5!(P3)$) node[shape=coordinate,label={[label distance=-6pt]135:$Q_2$}] (Q2) {Q2};
    \draw ($(P3)!.5!(P2)$) node[shape=coordinate,label={[label distance=-6pt]40:$Q_1$}] (Q1) {Q1};
    \draw ($(Q1)!.5!(Q2)$) node[shape=coordinate,label={[label distance=-3pt]90:{\footnotesize $F_3$}}] (F3) {F3};
    \draw ($(Q1)!.5!(Q3)$) node[shape=coordinate,label={[label distance=-6pt]315:{\footnotesize $F_2$}}] (F2) {F2};
    \draw ($(Q3)!.5!(Q2)$) node[shape=coordinate,label={[label distance=-6pt]225:{\footnotesize $F_1$}}] (F1) {F1};    

    \draw ($(F1)!0.4!(P1)$) node[draw, inner sep=2pt] (T1) {$T_1$};
    \draw ($(F2)!0.4!(P2)$) node[draw, inner sep=2pt] (T2) {$T_2$};
    \draw ($(F3)!0.4!(Q3)$) node[draw, inner sep=2pt] (T3) {$T_3$};
    \draw ($(F3)!0.5!(P3)$) node[draw, inner sep=2pt] (T4) {$T_4$};

    \draw (P1) -- (P2) -- (P3) -- cycle;
    \draw (Q1) -- (Q2) -- (Q3) -- cycle;
  \end{tikzpicture}
\end{figure}

Let $F_j:= \dell T_j \cap \dell T_4$ and $Q_1,Q_2,Q_3\in\Ncal(T_4)$ with $T_4=\conv\{F_j,Q_j\}$ for $j=1,\dots,4$ as depicted in Figure \ref{fig:redrefinement}. For $j=1,2,3$, define $w_j:= \intmean_{F_j} v \ds$ and for $j=1,\dots,4$, let $\ol{v}_j:= \intmean_{T_j} v \dx$. Lemma \ref{lem:discreteTraceIdentity} -- \ref{lem:normIdMinusP} imply, for $j=1,2,3$, 
\begin{align*}
 n\abs{\ol{v}_j - w_j} \leq \frac{\sqrt{n} h_{T_j}}{\sqrt{(n+2)\abs{T_j}}}\NormEnergyOn{v}{T_j}\text{ and } n\abs{\ol{v}_4 - w_j} \leq \frac{\sqrt{n} h_{T_4}}{\sqrt{(n+2)\abs{T_4}}}\NormEnergyOn{v}{T_4}.\numberthis\label{eqn:vjminusvfRED}
\end{align*}
With $\ol{v}:=\intmean_T v \dx = (\sum_{j=1}^4 \ol{v}_j )/4$, a minimization in $\R$ and the weighted Young's inequality yield
\begin{align*}
 \sum_{j=1}^4 (\ol{v}_j-\ol{v})^2 &= \min_{x\in\R} \sum_{j=1}^4 (\ol{v}_j-x)^2 \leq \sum_{j=1}^3 (\ol{v}_j-\ol{v}_4)^2\\
 &\leq \sum_{j=1}^3 4(\ol{v}_j-w_j)^2 + 4/3 (w_j-\ol{v}_4)^2.
\end{align*}
This, the orthogonality of $v-\ol{v}_j$ onto $\ol{v}-\ol{v}_j$ in $\Ltwo{T_j}$, and $\abs{T_1}=\dots=\abs{T_4}=\abs{T}/4$ show
\begin{align*}
 \distSq{v}{T} &= \sum_{j=1}^4 \NormLtwo{v -  \ol{v}}{T_j}^2 = \sum_{j=1}^4 \NormLtwo{v -  \ol{v}_j}{T_j}^2 + \abs{T_j} \abs{\ol{v}_j- \ol{v}}^2\\
&\leq \sum_{j=1}^4 \NormLtwo{v -  \ol{v}_j}{T_j}^2 + \abs{T}/4 (\sum_{j=1}^3 (4(\ol{v}_j-w_j)^2 + 4/3 (w_j-\ol{v}_4)^2).
% &\leq \sum_{j=1}^4 \NormLtwo{v -  \ol{v}_j}{T_j}^2 + (n(n+2))\inv \sum_{j=1}^4 4\NormEnergyOn{v}{T_j} \max_{j=1,\dots,4}{h_{T_j}^2}.
\end{align*}
The combination of this with \eqref{eqn:vjminusvfRED} concludes the proof.
\end{proof}

\begin{proofof}[of Theorem \ref{thm:Poin}]
 
Analogeously to the proof of Theorem \ref{thm:discPoin} but with red-refinement instead of bisection, let $\Tcal_0 := \{K\}$ and $\Tcal_\ell := \red^{(\ell)}(\Tcal_0)\in \Admis(\Tcal_0)$ for any $\ell\in\N_0$. For any multiindex $\alpha=(\alpha_1,\dots,\alpha_\ell)\in \{1,\dots,4\}^\ell$ of length $\dim\alpha=\ell\in\N_0$, define the $n$-simplex $K_\alpha$ recursively by $K_\emptyset := K$ and $\{K_{(\alpha,1)},\dots, K_{(\alpha,4)}\} = \red(K_\alpha)$ for extended multiindices $(\alpha,1),\dots,(\alpha,4) $ in $\{1,\dots,4\}^{l+1}$. This implies $\Tcal_\ell =\{K_\alpha\,|\, \dim\alpha=\ell\}$ and $h_\ell:= \max_{T\in\Tcal_\ell} h_T$ satisfies $h_{\ell+1}\leq  h_\ell/2$. Consequently,
\begin{equation}\label{eqn:geometricSeriesHred}
 \sum_{\ell=0}^{\infty} h_\ell^2 \leq h_0^2 \sum_{\ell=0}^{\infty} 4^{-\ell} =4h_0^2 /3.
\end{equation}
Successive applications of Lemma \ref{lem:estimateDistRed} as in the proof of Theorem \ref{thm:discPoin} lead to
\begin{align*}
 \NormLtwoK{v -\intmean_K v\dx} &\leq  (\sum_{\ell=1}^L h_\ell^2) \NormEnergyOn{v}{K}^2/2 + h_L^2 \cp^2 \NormEnergyOn{v}{K}^2\\
&\leq (h_K^2/6 + h_L^2 \cp^2) \NormEnergyOn{v}{K}^2
\end{align*}
The passage to the limit as $L\rightarrow \infty$ and $h_L\rightarrow 0$ concludes the proof.
\end{proofof}

The following theorem utilizes the discrete Poincar\'e inequality to prove a generalization of the error estimate for nonconforming interpolation \cite{CgallistlMorley} to nonconforming functions and also for \(n=3\). 
% Let \(\Tcal\) with refinement \(\Tcalf\in\Admis(\Tcal)\) and $\Inc:\CRf\rightarrow \CR$ denote the nonconforming interpolation operator with $(\Inc \hat v\crr)(\Mid(E))=\intmean_E \hat v\crr \ds$ for all $E\in\Ecal$ and $\hat v\crr\in \CRf$.
\begin{theorem}[Discrete Nonconforming Interpolation]
Set \(\kappa\nc^2 := C^2(n) + (n+1)\inv(n+2)\inv n^{-2}\) and let $\Inc\hat v\crr\in \CR$ with $(\Inc \hat v\crr)(\Mid(E))=\intmean_E \hat v\crr \ds$ for all $E\in\Ecal$ denote the nonconforming interpolation of the Crouzeix-Raviart function \(\hat v\crr \in CR^1_0(\Tcalf)\) on the refinement $\Tcalf\in \Admis(\Tcal)$ of \(\,\Tcal\). Then
\begin{equation*}
 h_K\inv \NormLtwoK{\hat v\crr - \Inc \hat v\crr} \leq \kappa\nc \NormEnergync{\hat v\crr- \Inc \hat v\crr}{K} \quad\text{ for any }K\in\Tcal.
\end{equation*}
\end{theorem}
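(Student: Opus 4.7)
Let \(w := \hat v\crr - \Inc\hat v\crr\). By construction of \(\Inc\), the function \(w|_K\) lies in \(H^1\nc(\Tcalf(K))\) with \(\intmean_E w\ds = 0\) for every \(E\in\Ecal(K)\). The plan is to decompose
\[
 \NormLtwoK{w}^2 = \NormLtwoK{w - \intmean_K w\dx}^2 + \abs{K}\Big(\intmean_K w\dx\Big)^2
\]
and to control the two summands separately. The mean-free part is handled immediately by the discrete Poincar\'e inequality of Theorem~\ref{thm:discPoin} applied on \(\Tcalf(K)\in\Admis(\{K\})\) and contributes at most \(C^2(n)h_K^2\NormEnergyncK{w}^2\).

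For the mean of \(w\), I would combine the trace identity of Lemma~\ref{lem:discreteTraceIdentity} with the orthogonality \(\int_K\nablanc w\dx = 0\). Choosing any side \(E\in\Ecal(K)\) with opposite vertex \(P\in\Ncal(K)\) and exploiting \(\intmean_E w\ds = 0\) gives
\[
 \intmean_K w\dx = -\frac{1}{n}\intmean_K (x - P)\cdot\nablanc w\dx.
\]
The orthogonality \(\int_K\nablanc w\dx = 0\) is the commuting-diagram property \(\nabla(\Inc\hat v\crr)|_K = \intmean_K\nablanc\hat v\crr\dx\), which I would verify by piecewise integration by parts on \(\Tcalf(K)\): the mean-zero jump of \(\hat v\crr\) kills all internal side contributions, whereas the boundary integrals on \(\partial K\) match those of \(\int_K\nabla(\Inc\hat v\crr)\dx\) because a linear function's mean over a side equals its midpoint value. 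This orthogonality lets me replace \(P\) by the centroid \(\bar x\) of \(K\) in the identity above at no cost.

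Cauchy\textendash{}Schwarz then yields
\[
 \abs{K}\Big(\intmean_K w\dx\Big)^2 \le \frac{1}{n^2\abs{K}}\NormLtwoK{\pbullet - \bar x}^2\, \NormEnergyncK{w}^2,
\]
and the closing step is a centroid analog of Lemma~\ref{lem:normIdMinusP}: writing \(x-\bar x = \sum_j\lambda_j(P_j - \bar x)\), applying the barycentric integration formula \(\int_K\lambda_i\lambda_j\dx = \abs{K}(1+\delta_{ij})/((n+1)(n+2))\), and using \(\sum_j(P_j-\bar x) = 0\) reduces everything to bounding \(\sum_j\abs{P_j - \bar x}^2\) by a dimension-dependent multiple of \(h_K^2\) via \(\abs{P_i - P_j}\le h_K\). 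Assembling the two contributions yields \(\kappa\nc^2 = C^2(n) + 1/(n^2(n+1)(n+2))\) as asserted. The main obstacle I anticipate is the rigorous justification of \(\int_K\nablanc w\dx = 0\) (treating correctly both the case of an unrefined element \(\Tcalf(K) = \{K\}\) and that of a genuine refinement) together with the bookkeeping of the centroid constant so that the additive correction matches the stated size.
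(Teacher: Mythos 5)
Your argument is correct and essentially coincides with the paper's proof: the same orthogonal split of \(\NormLtwoK{w}^2\) into the mean-free part (treated by the discrete Poincar\'e inequality of Theorem~\ref{thm:discPoin} on \(\Tcalf(K)\)) and the mean (treated by the trace identity of Lemma~\ref{lem:discreteTraceIdentity}, Cauchy--Schwarz, and the barycentric computation of \(\NormLtwoK{\pbullet-\Mid(K)}\) as in Lemma~\ref{lem:normIdMinusP}), with the identical constant. The only deviation is how the centroid enters the mean-value identity: the paper sums the trace identity over the \(n+1\) subsimplices \(\conv\{E_j,\Mid(K)\}\), using \(\intmean_{E_j}w\ds=0\) on every side, whereas you apply it once for a single side and then translate the opposite vertex to the centroid via the commuting property \(\int_K\nablanc(\hat v\crr-\Inc\hat v\crr)\dx=0\); both derivations are valid and yield the same identity \(\intmean_K w\dx=\tfrac1n\intmean_K(\Mid(K)-x)\cdot\nablanc w\dx\).
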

\begin{proof}
Let \(M=\Mid(K)\), \(\Ecal(K)=\{E_1,\dots,E_{n+1}\}\), and \(T_j=\conv\{E_j,M\}\) for \(j=1,\dots,n+1\). Then \(\hat w\crr := (\hat v\crr - \Inc \hat v\crr)|_K \in H^1\nc(\Tcalf(K))\) satisfies \(\int_{E_j} \hat w\crr \ds = 0\) and so Lemma~\ref{lem:discreteTraceIdentity} shows
\begin{equation*}
 \hat w_K \abs{K} := \int_K \hat w\crr \dx = \sum_{j=1}^{n+1} \int_{T_j} \hat w\crr \dx = \frac1n \int_K (M-x)\cdot \nablanc \hat w\crr \dx.
\end{equation*}
% 
% 
%  Let $K=\conv\{E,P\}\in\Tcal$ with \(E\in\Ecal(K)\), \(P\in\Ncal(K)\). Then \(\hat w\crr := (\hat v\crr - \Inc \hat v\crr)|_K \in H^1\nc(\Tcalf(K))\) satisfies \(\int_E \hat w\crr \ds = 0\) and so Lemma~\ref{lem:discreteTraceIdentity} shows
% \begin{equation*}
%  \hat w_K := \intmean_K \hat w\crr \dx =  \frac1n \intmean_K (P-x)\cdot \nablanc \hat w\crr \dx.
% \end{equation*}
 This and the discrete Poincar\'e inequality prove
\begin{align*}
 \NormLtwoK{\hat w\crr}^2 &= \NormLtwoK{\hat w\crr - \hat w_K}^2 + \abs{K} \abs{\hat w_K}^2\\
&\leq C^2(n) h_K^2 \NormEnergyncK{\hat w\crr}^2 + n^{-2} \abs{K}\inv \NormEnergyncK{\hat w\crr}^2 \NormLtwoK{\pbullet - M}^2.
\end{align*}
A modification in the proof of Lemma~\ref{lem:normIdMinusP} with \(M=0\) and therefore \(\sum_{j,k=1}^{n+1} P_j\cdot P_k=0\) proves \(\NormLtwo{\pbullet - M}{K}^2 \leq h_K^2 \abs{K}/((n+1)(n+2))\). This concludes the proof.
\end{proof}

\section{Enrichment Operator}
\label{sec:enrichment}
This section contains an interpolation estimate for a discrete interpolation operator $\Jc:\CR\rightarrow \Soz$ and the discrete Friedrichs inequality. Throughout this section, consider $n=2$.

\begin{remark}[\(3\)D case]
 The techniques of this section apply to the threedimensional case as well, but lead to more complicated constants and are not minutely detailed for brevity. The point is that there is no elementary enumeration of all simplices in a nodal patch. Therefore, the examination of different configurations leads to an eigenvalue problem with constants depending on the shape of the simplices.
\end{remark}

\begin{lemma}\label{lem:eigenvalues}
 For any $2\leq J\in\N$ and $x\in \R^J$, let $x_{J+1}:= x_1$, $\min x:= \min \{x_1,\dots, x_J\}$, and $\max x:= \max \{x_1,\dots, x_J\}$. Then it holds
 \begin{align*}
  \max_{x\in\R^J\setminus\{0\}, \min x\leq 0\leq \max x} \frac{\abs{x}^2}{\sum_{j=1}^J (x_{j+1}-x_j)^2} &=\max_{y\in\R^J\setminus\{0\}} \frac{\abs{y}^2}{\sum_{j=1}^J (y_{j+1}-y_j)^2 + (y_1+y_J)^2}\\
  &= \frac{1}{2(1-\cos(\pi/J))}.
 \end{align*}
\end{lemma}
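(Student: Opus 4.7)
Both sides are Rayleigh quotients of symmetric quadratic forms on $\R^J$, so my plan is to compute the smallest positive eigenvalue of each associated matrix and check that both equal $2(1-\cos(\pi/J))$.

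\emph{Second expression.} The identity $(y_1-y_J)^2+(y_1+y_J)^2=2y_1^2+2y_J^2$ rewrites the denominator as $\sum_{j=1}^{J-1}(y_{j+1}-y_j)^2+2y_1^2+2y_J^2$, which is the quadratic form $y^TBy$ of the tridiagonal matrix $B$ with diagonal $(3,2,\dots,2,3)$ and off-diagonals $-1$. Hence the second maximum equals $1/\lambda_{\min}(B)$, and I would compute $\lambda_{\min}(B)$ by the ansatz $v_j=\sin((j-\tfrac12)\alpha)$. The interior three-term recursion $-v_{j-1}+2v_j-v_{j+1}=\lambda v_j$ gives $\lambda=2(1-\cos\alpha)$; the boundary condition at $j=1$ reduces to the identity $(1+2\cos\alpha)\sin(\alpha/2)=\sin(3\alpha/2)$ and therefore holds for every $\alpha$; the boundary condition at $j=J$ becomes, after sum-to-product, $\sin(J\alpha)=0$, so $\alpha=k\pi/J$ with $k=1,\dots,J$. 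Consequently $\lambda_{\min}(B)=2(1-\cos(\pi/J))$ with eigenvector $v_j=\sin((j-\tfrac12)\pi/J)$.

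\emph{First expression.} Set $x^TCx:=\sum_{j=1}^J(x_{j+1}-x_j)^2$ (cyclic indices); since $\ker C=\mathrm{span}\{\mathbf 1\}$, split $x=c\mathbf 1+x'$ with $\sum_j x'_j=0$. Then $|x|^2=Jc^2+|x'|^2$ and $x^TCx=x'{}^TCx'$, while the constraint $\min x\le 0\le\max x$ translates to $c\in[-\max x',-\min x']$, an interval containing $0$ because $x'\perp\mathbf 1$ with $x'\ne 0$ forces $\min x'<0<\max x'$. For each fixed $x'\ne 0$ the Rayleigh quotient is convex (even quadratic) in $c$, so its maximum on this interval is attained at an endpoint, at which $x$ acquires a vanishing coordinate. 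Cyclic invariance of numerator and denominator allows me to place that zero at position $J$; writing $\tilde x:=(x_1,\dots,x_{J-1})$, the denominator collapses to $\sum_{j=1}^{J-2}(\tilde x_{j+1}-\tilde x_j)^2+\tilde x_1^2+\tilde x_{J-1}^2$, which is the Dirichlet-Laplacian quadratic form on $J-1$ interior points with vanishing endpoint values. Its smallest eigenvalue is $2(1-\cos(\pi/J))$ via the sine ansatz $\tilde v_j=\sin(j\pi/J)$, so the first maximum also equals $1/(2(1-\cos(\pi/J)))$.

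\textbf{Main obstacle.} The delicate step is the reduction of the first, constrained problem to a boundary configuration with a vanishing entry. A priori the supremum might be attained in the open region $\{\min x<0<\max x\}$, where Lagrange multipliers would identify $x$ as a nontrivial eigenvector of the circulant $C$ and yield the strictly smaller value $1/(2(1-\cos(2\pi/J)))$. The convexity of the Rayleigh quotient in the mean $c$---equivalently, the observation that one may push the mean of $x$ to the boundary of $[-\max x',-\min x']$ without shrinking $|x|^2$---is precisely what forces the optimum onto the stratum $\{\min x=0\}\cup\{\max x=0\}$; from there cyclic invariance and the sine ansatz identify the first maximum with the same Dirichlet-Laplacian eigenvalue already computed for the second expression.
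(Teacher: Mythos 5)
Your argument is correct and follows essentially the same route as the paper: reduce the constrained maximum to vectors with a vanishing coordinate (the paper shifts by $\mu\in[\min x,\max x]$ and uses convexity of the numerator in $\mu$, which is your decomposition $x=c\mathbf{1}+x'$ in different notation, followed by the same cyclic relabelling), and then identify both denominators with the tridiagonal matrices whose smallest eigenvalue is $2(1-\cos(\pi/J))$. The only cosmetic difference is that you derive the spectrum of $B$ by the half-integer sine ansatz, whereas the paper quotes it from a reference with equivalent explicit eigenvectors.
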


\begin{proof}
 Define 
 \begin{align*}
  K_1 &:= \{x\in\R^J\setminus\{0\}\,|\, \min x\leq 0\leq \max x\},\\
  K_2 &:= \{x\in\R^J\setminus\{0\}\,|\, \min x= 0\},\\
  K_3 &:= \{x\in\R^J\setminus\{0\}\,|\, x_1= 0\}.
 \end{align*}
For $x\in K_1$ and $\min x\leq \mu \leq \max x$, $y:=(x_j-\mu)_{j=1,\dots,J}\in K_1$ and
\begin{equation*}
 \sum_{j=1}^J y_j^2 = \sum_{j=1}^J x_j^2 -2\mu \sum_{j=1}^J x_j + \mu^2 J.
\end{equation*}
This quadratic function of $\mu $ attains its maximum at $\min x$ or $\max x$, then
\begin{equation*}
 \frac{\abs{x}^2}{\sum_{j=1}^J (x_{j+1}-x_j)^2} \leq \frac{\max\{\abs{x-\min x}^2,\abs{x-\max x}^2\}}{\sum_{j=1}^J (x_{j+1}-x_j)^2}.
\end{equation*}
Consequently, $(x-\min x), -(x-\max x)\in K_2$ and the permutability of the indices show that
\begin{align*}
 \max_{x\in K_1} \frac{\abs{x}^2}{\sum_{j=1}^J (x_{j+1}-x_j)^2} = \max_{x\in K_2} \frac{\abs{x}^2}{\sum_{j=1}^J (x_{j+1}-x_j)^2}=\max_{x\in K_3} \frac{\abs{x}^2}{\sum_{j=1}^J (x_{j+1}-x_j)^2}.
\end{align*}

Furthermore, any $x\in K_3$ satisfies $\sum_{j=1}^J (x_{j+1}-x_j)^2=x_2^2 + \sum_{j=2}^{J-1} (x_{j+1}-x_{j})^2 + x_{J}^2=\tilde x\cdot A\tilde x$ with $\tilde x=(x_2,\dots,x_J)$ and the tridiagonal $(J-1)\times(J-1)$ matrix
\begin{equation*}
 A=\begin{pmatrix}
    2&-1&&\\
    -1&\ddots&\ddots&\\
    &\ddots&\ddots&-1\\
    &&-1&2
   \end{pmatrix}\in\R^{(J-1)\times (J-1)}.
\end{equation*}
A direct calculation with the trigonometric addition formulas for the sine function shows that for any $k=1,\dots,J-1$, the vector $x^k$ with components $x^k_j=\sin(kj\pi/J)$ is an eigenvector of $A$ with eigenvalue $\lambda_k:=2(1-\cos(k\pi/J))>0$ \cite[Thm.~3.2(v)]{MR2440234}. Since $0<\lambda_1<\dots<\lambda_{J-1}$, $A$ is positive definite and $\lambda_1\abs{x}^2 =\lambda_1\abs{\tilde x}^2\leq \tilde x\cdot A\tilde x$ concludes the proof of the first equality.
 
For the second equality, observe that any $y\in \R^J\setminus\{0\}$ satifies $\sum_{j=1}^J (y_{j+1}-y_j)^2 + (y_1+y_J)^2= y\cdot By$ with the tridiagonal matrix
\begin{equation*}
 B=\begin{pmatrix}
    3&-1&&&\\
    -1&2&\ddots&&\\
    &\ddots&\ddots&\ddots&\\
    &&\ddots&2&-1\\
    &&&-1&3
   \end{pmatrix}\in\R^{J\times J}.
\end{equation*}
A straight-forward calculation shows that the vectors $y^k\in\R^J$ with components $y^k_j=(1+\cos(k\pi/J))\sin(kj\pi/J)- \sin(k\pi/J) \cos(kj\pi/J)$ for $k=1,\dots,J-1$ and $y^J_j=\cos(j\pi/J)$ are eigenvectors of $B$ with eigenvalues $\lambda_k:=2(1-\cos(k\pi/J))>0$ for $k=1,\dots,J$ \cite[Thm.~3.4(iii)]{MR2440234}. Consequently, $\lambda_1\abs{y}^2 \leq y\cdot By$.
\end{proof}

Let $\vcr\in\CR$ and $\vc:=\Jc(\vcr)\in \Soz$ with
\begin{equation}\label{eqn:convCondition}
 \vc(z) \in \conv\{ \vcr|_T(z) \,|\, T\in\Tcal(z) \}\quad\text{ for any }z\in\Ncal(\Omega).
\end{equation}
The shape regularity of $\Tcal$ leads to a minimum angle $\omz$ in $\Tcal$, i.e. $0<\omz\leq \min\sphericalangle\Tcal$. Let $\cinner:=\max\{\abs{\Tcal(z)}\,|\, z\in\Ncal(\Om)\}\geq 2$, $\cbd:=\max\{\abs{\Tcal(z)}\,|\, z\in\Ncal(\bOm)\}$, $\cpatch:= \max\{\cinner,\cbd\}$, and define $\cjc^2=(\sqrt{3}/2)\cot(\omz)/(1-\cos(\pi/\cpatch))$.
\begin{remark}%5 braucht man nur wg Spezialfall mit \cpatch=1, ansonsten geht auch 4.
 The estimate $(1-\cos(x))\inv \leq 4/x^2$ for $0<x\leq \pi/2$ leads to the simpler estimate
\begin{equation*}
 \cjc\leq (2\sqrt{3}\cot(\omz))\poh \cpatch/\pi.
\end{equation*}
\end{remark}
\begin{remark}
For the case of a triangulation with right isosceles triangles, $\cjc = (\sqrt{3}/(2-2\cos(\pi/8)))\poh \leq 3.3729$.
\end{remark}
% \begin{remark}
%  Elementary analysis reveals $x^2/4 \leq 1-\cos(x)$ for any $0<x\leq \sqrt{6}$.
% \end{remark}
% \begin{proof}
% Any $x^2\leq 6$ satisfies
% \begin{align*}
%   \frac{x^2}{4}\leq \frac{x^2}{4} \big( 2 - \frac{x^2}{6}\big) = \frac{x^2}{2!}-\frac{x^4}{4!}.
% \end{align*}
%  The estimate $0<x\leq \sqrt{6}$ implies that $k=2,3,\dots$ and thus $6\leq 56 \leq 4k(4k-1)$ satisfies
% \begin{align*}
%  x^2 &\leq 4k(4k-1)\\
%  \implies x^{4k} &\leq x^{4k-2} 4k(4k-1)\\
%  \implies \frac{x^{4k}}{(4k)!} &\leq \frac{x^{4k-2}}{(4k-2)!}\\
%  \implies  0&\leq \frac{x^{4k-2}}{(4k-2)!} - \frac{x^{4k}}{(4k)!}.
% \end{align*}
% This implies
% \begin{align*}
%  \frac{x^2}{4}\leq \frac{x^2}{2!} - \frac{x^4}{4!} \leq  \frac{x^2}{2!} - \frac{x^4}{4!} + \frac{x^6}{6!} - \frac{x^8}{8!} \pm\dots   = 1-\cos x.
% \end{align*}
% \end{proof}

\begin{theorem}[Interpolation error for $\Jc$]\label{thm:interror}
 Any interpolation operator $\Jc:\CR\rightarrow \Soz$ with \eqref{eqn:convCondition} satisfies
\begin{equation*}
 \NormLtwoOm{\htc\inv(1-\Jc)\vcr} \leq \cjc \NormEnergyncOnly{\vcr}.
\end{equation*}
This estimate also holds for any $T\in\Tcal$ in that
\begin{equation*}
 \NormLtwo{h_T\inv(1-\Jc)\vcr}{T} \leq \cjc \NormEnergync{\vcr}{\omega_T}.
\end{equation*}
\end{theorem}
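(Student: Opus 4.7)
The plan is to localize the error estimate first to a single triangle and then to each of its three vertices, and to use Lemma~\ref{lem:eigenvalues} to bundle the values of \(\vcr\) attained at one vertex by the various elements of the nodal patch.

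Let \(w:=\vcr-\Jc\vcr\). Since \(w|_T\) is affine on every \(T\in\Tcal\), it is determined by its nodal values \(w(z)=\vcr|_T(z)-\vc(z)\) at \(z\in\Ncal(T)\); the standard mass-matrix identity \(\int_T\lambda_j\lambda_k\dx=(1+\delta_{jk})|T|/12\) (already invoked in the proof of Lemma~\ref{lem:normIdMinusP}) yields
\begin{equation*}
 \NormLtwoT{w}^2 \leq \frac{|T|}{3}\sum_{z\in\Ncal(T)}|w(z)|^2,
\end{equation*}
so the whole argument reduces to estimating a single nodal error \(|w(z)|^2\) by a local piecewise-gradient energy and summing.

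For a fixed interior vertex \(z\in\Ncal(\Om)\), I would enumerate \(\Tcal(z)=\{T_1,\dots,T_{J_z}\}\) cyclically with interior edges \(E_j:=T_j\cap T_{j+1}\) and set \(a_j:=\vcr|_{T_j}(z)\). Hypothesis \eqref{eqn:convCondition} places \(\vc(z)\) in \([\min_j a_j,\max_j a_j]\), whence \(x_j:=\vc(z)-a_j\) has mixed signs, and the first identity of Lemma~\ref{lem:eigenvalues} gives
\begin{equation*}
 \sum_{j=1}^{J_z}|\vc(z)-a_j|^2 \leq \frac{1}{2(1-\cos(\pi/J_z))}\sum_{j=1}^{J_z}|\jump{\vcr}_{E_j}(z)|^2.
\end{equation*}
For a boundary vertex \(z\in\Ncal(\bOm)\) (so \(\vc(z)=0\)), the second identity of Lemma~\ref{lem:eigenvalues} applied to \(y_j:=-a_j\) absorbs the two incident boundary edges \(E_0,E_{J_z}\) through the \((y_1+y_{J_z})^2\) term, using \(\jump{\vcr}_E=\vcr\) and \(\vcr(\Mid(E))=0\) on \(E\in\Ecal(\bOm)\). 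In either case \(|w(z)|^2\) is dominated by the same right-hand side, since \(w(z)\) is one summand in \(|\vc(z)-a_j|^2\).

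It remains to convert each vertex-jump into a tangential derivative and then into a piecewise gradient. Because \(\jump{\vcr}_E\) is affine on \(E\) with zero mean (CR midpoint continuity in the interior, Dirichlet condition on the boundary), it vanishes at \(\Mid(E)\), and therefore
\begin{equation*}
 |\jump{\vcr}_E(z)|^2 = \frac{|E|^2}{4}|\partial_{\tau_E}\jump{\vcr}_E|^2 \leq \frac{|E|^2}{2}\sum_{T'\in\Tcal,\,E\in\Ecal(T')}|\nabla\vcr|_{T'}|^2.
\end{equation*}
Regrouping this sum by triangles makes each \(T'\in\Tcal(z)\) contribute its two edges at \(z\), whose squared lengths sum to at most \(2h_{T'}^2\); the shape-regularity identity \(h_{T'}^2/|T'|=2(\cot\alpha'+\cot\beta')\leq 4\cot(\omz)\) (for the two angles \(\alpha',\beta'\) at the endpoints of the longest edge of \(T'\)) then converts \(h_{T'}^2|\nabla\vcr|_{T'}|^2\) into \(\NormEnergync{\vcr}{T'}^2\) up to an explicit geometric factor. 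Combining with \(|T|/h_T^2\leq 1/2\) and the covering bound \(\sum_{z\in\Ncal(T)}\NormEnergync{\vcr}{\omega_z}^2\leq 3\NormEnergync{\vcr}{\omega_T}^2\) collapses everything onto \(\NormEnergync{\vcr}{\omega_T}^2\), and the global estimate follows by summation over \(T\) with bounded patch overlap. The main obstacle is not conceptual but bookkeeping: extracting the sharp announced constant \((\sqrt{3}/2)\cot(\omz)/(1-\cos(\pi/\cpatch))\) rather than a looser one demands a careful balance at the geometric step \((|E_{j-1}|^2+|E_j|^2)/|T'|\leq c(\omz)\) and at the patch-overlap counting, and requires stitching the interior and boundary versions of Lemma~\ref{lem:eigenvalues} seamlessly along mixed patches.
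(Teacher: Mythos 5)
Your argument is essentially the paper's own proof: the same mass-matrix eigenvalue bound reduces the $L^2$ error on $T$ to the three nodal deviations, the same two identities of Lemma~\ref{lem:eigenvalues} treat interior patches (via the mixed signs forced by \eqref{eqn:convCondition}) and boundary patches (via the $(y_1+y_J)^2$ term and the midpoint Dirichlet condition), and the same midpoint-continuity plus $h_T^2/\abs{T}\leq 4\cot(\omz)$ computation converts vertex jumps into local energies. The only adjustments needed to land exactly on the announced $\cjc$ rather than a slightly larger constant are the sharp isodiametric bound $\abs{T}\leq \sqrt{3}h_T^2/4$ in place of $\abs{T}\leq h_T^2/2$, and carrying out the global summation over all element--vertex pairs before enlarging each single-element nodal error to the full patch quantity (so that the overlap factor is exactly $3$) --- both of which fall under the bookkeeping you already flag.
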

\begin{proof}
 For any $T\in\Tcal$ and $z\in\Ncal(T)$, let $e_T(z):= \vcr|_T(z) - \vc(z)$ and $e(z)^2:= \sum_{T\in\Tcal(z)} e_T(z)^2$. With $e_T:=(e_T(z))_{z\in\Ncal(T)}\in\R^3$, a direct calculation with mass matrix 
 \begin{equation*}
  M=\frac{\abs{T}}{12}\begin{pmatrix}
     2&1&1\\ 1&2&1\\ 1&1&2
    \end{pmatrix}
\in\R^{3\times 3}
 \end{equation*}
of the barycentric coordinates with eigenvalues $\abs{T}/12$ and $\abs{T}/3$ and the estimate $\abs{T}\leq \sqrt{3}h_T^2/4$ shows
\begin{equation}\label{eqn:LHSvse}
 h_T^{-2} \NormLtwoT{\vcr-\vc}^2 =h_T^{-2} e_T\cdot M e_T \leq \abs{T}/(3h_T^2) \abs{e_T}^2 \leq 1/(4\sqrt{3}) \sum_{z\in\Ncal(T)} e_T(z)^2.
\end{equation}
Any $T\in\Tcal$ and $p_1\in P_1(T)$ satisfy 
\begin{equation*}
 \max_{z_1,z_2\in\Ncal(T)} \abs{p_1(z_1)-p_1(z_2)}^2 \leq h_T^2/\abs{T}\NormEnergyOn{p_1}{T}^2.
\end{equation*}
This, $h_T^2/\abs{T}\leq 4\cot(\omz)$ and the triangle inequality show that any $\partial T_+\cap \partial T_- \in\Ecal(\Om)$ with $z\in\Ncal(E)$ and $T_\pm\in\Tcal$ satisfies
\begin{align*}
 \abs{e_{T_+}(z) - e_{T_-}(z) } &= \abs{\vcr|_{T_+}(z) - \vcr|_{T_-}(z) }\\
&\leq \abs{\vcr|_{T_+}(z) - \vcr(\Mid(E)) } + \abs{\vcr(\Mid(E)) - \vcr|_{T_-}(z) }\\
&\leq 1/2 \max_{z_1,z_2\in\Ncal(T_+)} \abs{\vcr|_{T_+}(z_1)-\vcr|_{T_+}(z_2)}\\
&\quad\quad\quad +  1/2 \max_{z_1,z_2\in\Ncal(T_-)} \abs{\vcr|_{T_-}(z_1)-\vcr|_{T_-}(z_2)} \\
&\leq \cot(\omz)\poh (\NormEnergyOn{\vcr}{T_+} +\NormEnergyOn{\vcr}{T_-} )\\
&\leq (2\cot(\omz))\poh \NormEnergync{\vcr}{\omega_E}.\numberthis\label{eqn:errordifference}
\end{align*}
Analogeously, $E\in\Ecal(\bOm)$ with $T\in\Tcal$, $E\in\Ecal(T)$, and $z\in\Ncal(E)$ satisfies $\abs{e_T(z)}\leq \cot(\omz)\poh \NormEnergync{\vcr}{T}$.

Consider $z\in\Ncal(\bOm)$ with $\Tcal(z)=\{T_1,\dots,T_J\}$ and $E_1:=\partial T_1\cap \bOm$, $E_{J+1}:=\partial T_J\cap \bOm$, and $E_{j+1}:=\partial T_j\cap \partial T_{j+1}\in\Ecal(\Om)$ for $j=1,\dots,J-1$. With $e_j:=e_{T_j}(z)$ for $j=1,\dots,J$ and $e_{J+1}:=e_1$, the previous estimates show that $\abs{e_j}^2\leq \cot(\omz) \NormEnergync{\vcr}{T_j}^2$ for $j=1,J$ and $\abs{e_j-e_{j+1}}^2\leq 2\cot(\omz) \NormEnergync{\vcr}{\omega_{E_{j+1}}}^2$ for $j=1,\dots,J-1$. Hence
\begin{equation}\label{eqn:eMeUpper}
  \abs{e_1 + e_J}^2 + \sum_{j=1}^{J} \abs{e_{j+1}-e_j}^2  = 2\abs{e_1}^2 + \sum_{j=1}^{J-1} \abs{e_{j+1}-e_j}^2 + 2\abs{e_J}^2\leq 4 \cot(\omz) \NormEnergync{\vcr}{\omega_z}^2.
\end{equation}
This and Lemma \ref{lem:eigenvalues} show that $e=(e_1,\dots,e_J)^\top\in \R^J$ satisfies 
\begin{align*}
 e(z)^2 = \abs{e}^2 \leq 2 \cot(\omz)/(1-\cos(\pi/J)) \NormEnergync{\vcr}{\omega_z}^2.
\end{align*}

For $z\in\Ncal(\Om)$ with $\Tcal(z)=\{T_1,\dots,T_J\}$, $T_{J+1}:=T_{1}$ and $ \partial T_j\cap \partial T_{j+1}\in\Ecal(\Om)$ for $j=1,\dots,J-1$ and $\partial T_J\cap \partial T_{1}\in\Ecal(\Om)$, \eqref{eqn:errordifference} shows that $\abs{e_j-e_{j+1}}^2\leq 2\cot(\omz) \NormEnergync{\vcr}{T_j\cup T_{j+1}}^2$ for $j=1,\dots,J$. Since $0\in\conv\{e_1,\dots,e_J\}$, it follows $\min e \leq 0 \leq \max e$ and Lemma \ref{lem:eigenvalues} leads to
\begin{align}\label{eqn:ez4inner}
 e(z)^2 = \abs{e}^2 \leq 2 \cot(\omz)/(1-\cos(\pi/J)) \NormEnergync{\vcr}{\omega_z}^2.
\end{align}
Altogether, any $z\in\Ncal$ satisfies
\begin{align*}
 e(z)^2 \leq 2 \cot(\omz)/(1-\cos(\pi/\cpatch)) \NormEnergync{\vcr}{\omega_z}^2=: 4/\sqrt{3}\cjc^2 \NormEnergync{\vcr}{\omega_z}^2.
\end{align*}
This, \eqref{eqn:LHSvse}, and an overlapping argument show the local estimate
\begin{equation*}
 h_T^{-2} \NormLtwoT{\vcr-\vc}^2 \leq 1/(4\sqrt{3}) \sum_{z\in\Ncal(T)} e(z)^2\leq \sum_{z\in\Ncal(T)}\cjc^2/3  \NormEnergync{\vcr}{\omega_z}^2 \leq \cjc^2 \NormEnergync{\vcr}{\omega_T}^2.
\end{equation*}
The sum over all $T\in\Tcal$ and the previous arguments lead to
\begin{align}\label{eqn:finalEst}
 \NormLtwoOm{\htc\inv(\vcr-\vc)}^2 \leq \cjc^2/3 \sum_{z\in\Ncal} \NormEnergync{\vcr}{\omega_z}^2= \cjc^2 \NormEnergyncOnly{\vcr}^2.
\end{align}
\end{proof}
%ggf \cite{veeserverfuerth}

\begin{examples}\label{rem:examples}
\begin{enumerate}
 \item One example of $\Jc:\CR\rightarrow \Soz$ with \eqref{eqn:convCondition} is the enrichment operator $\Jc :=J_1$ \cite[p.~297]{brennerscott} with
\begin{equation}\label{eqn:defj1}
 J_1\vcr(z) := \abs{\Tcal(z)}\inv \sum_{T\in\Tcal(z)} (\vcr|_T)(z) \quad\text{ for any }z\in\Ncal(\Omega).
\end{equation}
\item Another is the (possibly new) precise representation $\Jc\vcr := I_C \vcr^\star$ with
\begin{equation}
  I_C\vcr^\star(z) := (2\pi)\inv \sum_{T\in\Tcal(z)} \measuredangle(T,z) (\vcr|_T)(z) \quad\text{ for any }z\in\Ncal(\Omega).
\end{equation}
\item Other examples are the maximum or minimum at each node,
\begin{align*}
  \Jc\vcr(z) &:= \max_{T\in\Tcal(z)} (\vcr|_T)(z) \quad\text{ for any }z\in\Ncal(\Omega)\text{ or }\\
  \Jc\vcr(z) &:= \min_{T\in\Tcal(z)} (\vcr|_T)(z) \quad\text{ for any }z\in\Ncal(\Omega).
\end{align*}
\item A discrete quasi-interpolation for the proof of optimal convergence rates of adaptive methods motivates the next example in a general formulation here. In the context of adaptive methods, $\mathcal{U}=\Tcal\cap\Tcalf\ins\Tcal$ for a triangulation $\Tcal$ and refinement $\Tcalf$, see Remark \ref{rem:dQI}. 
In a general setting, let $\vcr\in\CR$ and suppose there exists $\mathcal{U}\ins\Tcal$ such that for any $K_1,K_2\in\mathcal{U}$ with a shared node $z\in\Ncal(K_1)\cap\Ncal(K_2)$, the value of $\vcr$ at $z$ coincide, e.g. $\vcr|_{K_1}(z) = \vcr|_{K_2}(z)$. Hence, $J_{QI}\vcr\in\Soz$ is well-defined and satisfies \eqref{eqn:convCondition} for
\begin{equation}\label{eqn:defjQI}
 J_{QI}\vcr(z) := \begin{cases}
                   \vcr|_{K}(z)&\text{ if there exists }K\in\mathcal{U}\text{ with }z\in\Ncal(K),\\
		   J_1\vcr(z)&\text{ else.}
                  \end{cases}
\end{equation}
\end{enumerate}

\end{examples}

\begin{remark}
 Similar calculations with $2\abs{e_{T_+}(z) - e_{T_-}(z) }\leq \eta_E :=\abs{E}\abs{\jump{\partial\vcr/\partial s}_E}$ for $E\in\Ecal(\Omega)$ in \eqref{eqn:errordifference}, $2\abs{e_{T}(z)}\leq \eta_E$ for $E\in\Ecal(\partial\Omega)$,  and $\sum_{E\in\Ecal} \eta_E^2 \leq 30\cot(\omz) \NormEnergyncOnly{\vcr-v}$ for any $v\in H^1_0(\Omega)$ lead to a generalized version of Theorem \ref{thm:interror} with $C_1^2=15\cot(\omz)/(8\sqrt{3} \min\{1-\cos (\pi/\cinner), 1-\cos(\pi/(\cbd+1))\})$,
\begin{equation*}
 \NormLtwoOm{\htc\inv(1-\Jc)\vcr} \leq C_1 \min_{v\in H^1_0(\Omega)} \NormEnergyncOnly{\vcr-v}.
\end{equation*}

\end{remark}

\begin{lemma}\label{lem:caseJ1}
 For the special case $\Jc=J_1$ from \eqref{eqn:defj1}, an improved constant in the estimate of Theorem \ref{thm:interror} reads
\begin{equation*}
 \cjc(J_1)^2=(\sqrt{3}/2)\cot(\omz)/ \min\{1-\cos (2\pi/\cinner), 1-\cos(\pi/\cbd)\}.
\end{equation*}
\end{lemma}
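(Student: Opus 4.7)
The plan is to revisit the proof of Theorem~\ref{thm:interror} and exploit a sharper spectral bound at interior nodes arising from the mean-zero structure of $J_1$. For $\Jc = J_1$ and any interior node $z \in \Ncal(\Om)$ with $\Tcal(z) = \{T_1,\dots,T_J\}$, the definition \eqref{eqn:defj1} gives $J \cdot J_1\vcr(z) = \sum_{j=1}^{J} \vcr|_{T_j}(z)$, so the local errors $e_j := \vcr|_{T_j}(z) - J_1\vcr(z)$ satisfy the additional linear constraint $\sum_{j=1}^{J} e_j = 0$. This is strictly stronger than the mere inclusion $0 \in \conv\{e_1,\dots,e_J\}$ that was used for a general $\Jc$ in the derivation of \eqref{eqn:ez4inner}, and it is the only place where the proof specializes.

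The key new ingredient is therefore a sharpened version of the first identity in Lemma~\ref{lem:eigenvalues}, namely
\begin{equation*}
\max_{x \in \R^J \setminus \{0\},\ \sum_{j} x_j = 0} \frac{|x|^2}{\sum_{j=1}^{J} (x_{j+1} - x_j)^2} = \frac{1}{2(1-\cos(2\pi/J))}
\end{equation*}
with cyclic indices $x_{J+1} := x_1$. This follows by diagonalizing the circulant matrix $C$ defined by $x \cdot C x = \sum_{j=1}^{J}(x_{j+1}-x_j)^2$ in the discrete Fourier basis, whose eigenvalues are $2(1-\cos(2\pi k/J))$ for $k=0,\dots,J-1$. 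The kernel of $C$ is spanned by the constant vector $(1,\dots,1)$, so the mean-zero constraint is precisely orthogonality to this kernel, and the smallest admissible eigenvalue is $2(1-\cos(2\pi/J))$, which strictly exceeds the bound $2(1-\cos(\pi/J))$ available for the tridiagonal matrix $A$ in Lemma~\ref{lem:eigenvalues}.

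Inserting this sharper Rayleigh quotient in place of \eqref{eqn:ez4inner}, while keeping unchanged the bound $\sum_{j=1}^{J} |e_{j+1}-e_j|^2 \leq 4\cot(\omz) \NormEnergync{\vcr}{\omega_z}^2$ derived from \eqref{eqn:errordifference} by summation over the cyclic neighbour pairs, yields for any interior node
\begin{equation*}
e(z)^2 \leq \frac{2 \cot(\omz)}{1 - \cos(2\pi / |\Tcal(z)|)} \NormEnergync{\vcr}{\omega_z}^2.
\end{equation*}
For a boundary node $z \in \Ncal(\bOm)$, the enforcement $J_1\vcr(z) = 0$ (forced by $J_1\vcr \in \Soz$) produces no extra linear relation on $(e_1,\dots,e_J)$, so the matrix-$B$ bound in terms of $1-\cos(\pi/|\Tcal(z)|)$ from the proof of Theorem~\ref{thm:interror} is retained verbatim. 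Taking the worse of the two bounds over the patch sizes and reusing the mass-matrix step \eqref{eqn:LHSvse} together with the overlapping-patch summation \eqref{eqn:finalEst} without change delivers the asserted constant $\cjc(J_1)^2$.

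The main technical hurdle is the Fourier diagonalization of the circulant Laplacian restricted to the mean-zero subspace; once this eigenvalue identity is recorded, the remainder is a direct substitution into the already established argument of Theorem~\ref{thm:interror}.
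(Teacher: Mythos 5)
Your proposal is correct and follows essentially the same route as the paper: the mean-zero property of $J_1$ at interior nodes makes $e$ orthogonal to the kernel of the cyclic difference matrix $C$, whose Fourier diagonalization gives the smallest admissible eigenvalue $2(1-\cos(2\pi/J))$, while the boundary-node estimate and the remaining assembly are retained verbatim. This matches the paper's argument step for step.
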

\begin{proof}
 The only change with respect to the proof of Theorem \ref{thm:interror} concerns the estimate \eqref{eqn:ez4inner} of $e(z)^2$ for inner nodes $z\in\Ncal(\Om)$. Recall that for  $z\in\Ncal(\Om)$ with patch $\Tcal(z)=\{T_1,\dots,T_J\}$ and $e_j=\vcr|_{T_j}(z)-\vc(z)$ for $j=1,\dots,J$, \eqref{eqn:errordifference} shows
\begin{equation*}
 \abs{e_j-e_{j+1}}^2\leq 2\cot(\omz) \NormEnergync{\vcr}{T_j\cup T_{j+1}}^2\text{ for }j=1,\dots,J
\end{equation*}
(with $e_{J+1}:= e_0$ and $T_{J+1}:= T_0$). Define $e=(e_1,\dots,e_J)^\top\in\R^J$ and 
\begin{equation*}
 C=\begin{pmatrix}
    2&-1&&&-1\\
    -1&2&\ddots&&\\
    &\ddots&\ddots&\ddots&\\
    &&\ddots&2&-1\\
    -1&&&-1&2
   \end{pmatrix}\in\R^{J\times J}.
\end{equation*}
Consequently, 
\begin{equation}\label{eqn:eCe}
 e\cdot Ce = \sum_{j=1}^{J} \abs{e_j-e_{j+1}}^2 \leq 4\cot(\omz) \NormEnergync{\vcr}{\omega_z}^2.
\end{equation}

For an approach similar to the one in the proof of Lemma \ref{lem:eigenvalues}, compute the eigenvalues $0=\lambda_0<\lambda_1<\dots<\lambda_K$ of the matrix $C\in \R^{J\times J}$ with $K:=\lfloor J/2 \rfloor$ with floor function $\lfloor \pbullet\rfloor$ (i.e. $K=J/2$ for even $J$ and $K = (J-1)/2$ for odd $J$), $\lambda_k=2-2\cos( 2k\pi/J)$ \cite[Thm.~3.4(v)]{MR2440234} for $k=0,\dots,\lfloor J/2\rfloor$. Indeed, the trigonometric addition formulae for sine and cosine show that the vectors $x^K,y^K\in\R^J$ with $x^k_j=\cos(2jk\pi/J)$, $y^k_j=\sin(2jk\pi/J)$ for $j=1,\dots,J$, are the $0$-vector or non-zero eigenvectors of $C$ with eigenvalue $\lambda_k$ for $k=0,\dots,K$. An analysis of linear independence of $x^k,y^k\neq 0$ for even and odd $J$ shows that there are $J$ linearly independent eigenvectors. In any case, $C$ is positive semi-definite with eigenvalues $0=\lambda_0<\lambda_1<\dots<\lambda_K$ and $\lambda_0=0$ is a simple eigenvalue with the eigenvector $u=(1,\dots,1)^\top$ that is orthogonal to all other eigenvectors of $C$.

The identities $e=(\vcr|_{T_1}(z),\dots,\vcr|_{T_J}(z))^\top-\vc(z) u$ and the definition of $\vc(z)$ imply the orthogonality $e\cdot u = 0$. Hence, $\lambda_1\abs{e}^2\leq e\cdot Ce$ and therefore \eqref{eqn:eCe} shows
\begin{align*}
 e(z)^2 = \abs{e}^2 \leq (4\cot(\omz)/\lambda_1) \NormEnergync{\vcr}{\omega_z} = (2\cot(\omz)/(1-\cos 2\pi/J)) \NormEnergync{\vcr}{\omega_z}.
\end{align*}
The remaining parts of the proof of Theorem \ref{thm:interror} apply verbatim with different constants.
\end{proof}
% 
% \begin{remark}
% In this special case, the inequality $(1-\cos(x))\inv \leq 5/x^2$ for $x= \pi/\cbd\leq \pi$ and $x= 2\pi/\cinner$ leads to the estimate
% \begin{equation*}
%  \cjc(J_1)\leq\sqrt{5\cot(\omz)}/\pi \max\{ \cinner/2, \cbd\}.
% \end{equation*}
% \end{remark}
\begin{example}%Abschaetzung fuer exakte Konstante
For the case of a triangulation of a convex domain with right isosceles triangles, $\cjc(J_1)= (\sqrt{3}/(2-2\cos(\pi/4)))\poh \leq 1.6002$.%omz=45°, cinner=8, cbd=4
\end{example}

The use of this discrete interpolation estimate enables a proof of the discrete Friedrichs inequality and an interpolation estimate for a new quasi-interpolation operator $J:H^1_0(\Om)\rightarrow S^1_0(\Tcal)$ with the help of an inverse estimate.

\begin{lemma}[inverse estimate]\label{lem:invEst}
 Any $T\in\Tcal$, $p_1\in P_1(T)$, and the constant 
\begin{equation*}
 \cinv^2 := 24 \cot(\omz) (2\cot(\omz)-\cot(2\omz) + ((2\cot(\omz)-\cot(2\omz))^2-3)\poh)
\end{equation*}
satisfy
\begin{equation*}
 \NormEnergyOn{p_1}{T} \leq \cinv h_T\inv\NormLtwo{p_1}{T}.
\end{equation*}
\end{lemma}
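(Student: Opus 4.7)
The plan is to pass to the coefficient vector \(a:=(p_1(P_1),p_1(P_2),p_1(P_3))^\top\in\R^3\) after expanding \(p_1 = \sum_{j=1}^3 p_1(P_j)\lambda_j\) in the barycentric coordinates \(\lambda_1,\lambda_2,\lambda_3\) of \(T=\conv\{P_1,P_2,P_3\}\). Then \(\NormLtwoT{p_1}^2 = a\cdot Ma\) holds with the local mass matrix \(M\) recalled in the proof of Theorem~\ref{thm:interror}, whose eigenvalues are \(|T|/3\) on the span of \(\mathbf{1}:=(1,1,1)^\top\) and \(|T|/12\) on \(\mathbf{1}^\perp\), while \(\NormEnergyOn{p_1}{T}^2 = a\cdot Sa\) holds with the stiffness matrix
\begin{equation*}
 S = \frac{1}{2}\begin{pmatrix} \cot\beta+\cot\gamma & -\cot\gamma & -\cot\beta \\ -\cot\gamma & \cot\alpha+\cot\gamma & -\cot\alpha \\ -\cot\beta & -\cot\alpha & \cot\alpha+\cot\beta \end{pmatrix}
\end{equation*}
given by the classical cotangent formula for the interior angles \(\alpha,\beta,\gamma\) of \(T\).

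The partition-of-unity identity \(\sum_j\lambda_j\equiv 1\) forces \(S\mathbf{1}=0\), so the two nonzero eigenvalues \(\lambda_1,\lambda_2\) of \(S\) live on \(\mathbf{1}^\perp\) and Vieta immediately gives \(\lambda_1+\lambda_2=\mathrm{tr}(S)=\sigma:=\cot\alpha+\cot\beta+\cot\gamma\). A direct computation of the sum of the \(2\times 2\) principal minors of \(S\), combined with the classical trigonometric identity \(\cot\alpha\cot\beta+\cot\beta\cot\gamma+\cot\gamma\cot\alpha=1\) for \(\alpha+\beta+\gamma=\pi\), produces \(\lambda_1\lambda_2=3/4\) and therefore
\begin{equation*}
 \lambda_{\max}(S)=\tfrac{1}{2}\bigl(\sigma+\sqrt{\sigma^2-3}\bigr).
\end{equation*}
Splitting \(a=c\mathbf{1}+a_\perp\) with \(a_\perp\perp\mathbf{1}\), the eigenstructure of \(M\) then yields \(a\cdot Sa=a_\perp\cdot Sa_\perp\leq\lambda_{\max}(S)|a_\perp|^2\) and \(a\cdot Ma\geq (|T|/12)|a_\perp|^2\), so that \(\NormEnergyOn{p_1}{T}^2\leq (12\,\lambda_{\max}(S)/|T|)\NormLtwoT{p_1}^2\).

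It remains to bound \(\lambda_{\max}(S)\) and the shape factor \(h_T^2/|T|\) by functions of \(\omz\) alone. Placing \(h_T\) opposite the largest angle \(\gamma\) yields \(|T|=\tfrac{1}{2}h_T^2\sin\alpha\sin\beta/\sin\gamma\); monotonicity in the smaller angles under \(\alpha,\beta\geq\omz\) and \(\alpha+\beta=\pi-\gamma\) drives the optimiser to the degenerate isosceles triangle with two angles equal to \(\omz\) and delivers \(h_T^2/|T|\leq 4\cot\omz\). Since \(\sigma\mapsto\sigma+\sqrt{\sigma^2-3}\) is increasing, it suffices to maximise \(\sigma\) on \(\{\alpha,\beta,\gamma\geq\omz,\,\alpha+\beta+\gamma=\pi\}\); a second-derivative check shows that the unique interior critical point \(\alpha=\beta=(\pi-\gamma)/2\) of \(\cot\alpha+\cot\beta\) at fixed \(\gamma\) is a local minimum, so the maximum of \(\sigma\) is attained at a vertex of the feasible set, namely \(\alpha=\beta=\omz\), \(\gamma=\pi-2\omz\), where \(\sigma=2\cot\omz-\cot(2\omz)\). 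Multiplying the two bounds, the product \(12\cdot 4\cot(\omz)\cdot\tfrac{1}{2}(\sigma+\sqrt{\sigma^2-3})\) reproduces \(\cinv^2\) exactly.

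The hard part is this final constrained optimisation on the angle simplex: one must rule out interior critical points via the second-derivative test so that both trigonometric extrema are driven to a vertex of the feasible set. Pleasantly, both sharp estimates are realised by the same limiting isosceles triangle with two angles \(\omz\), which explains and motivates the compact closed form of \(\cinv\).
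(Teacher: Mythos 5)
Your argument is correct and follows exactly the route the paper sketches: diagonalising the local mass matrix, computing the nonzero eigenvalues of the cotangent stiffness matrix via $\mathrm{tr}(S)=\sigma$ and $\lambda_1\lambda_2=3/4$ to obtain the local bound $6(\sigma+\sqrt{\sigma^2-3})/\abs{T}$, and then maximising $\sigma$ and $h_T^2/\abs{T}$ over the angle constraint $\alpha,\beta,\gamma\geq\omz$. Your write-up in fact supplies the details (the principal-minor computation and the endpoint argument for the constrained maximisation) that the paper's proof only asserts.
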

\begin{proof}
 An analysis of the eigenvalues of the stiffness and the mass matrix and $\sigma=\sum_{z\in\Ncal(T)} \cot(\measuredangle(T,z))$ leads to the local inverse estimate
\begin{equation*}
  \NormEnergyOn{p_1}{T}^2 \leq 6(\sigma + \sqrt{\sigma^2 -3})/\abs{T} \NormLtwoT{p_1}^2.
\end{equation*}
A maximization shows $\sigma\leq 2\cot(\omz)-\cot(2\omz)$ and $1/\abs{T}\leq h_T^{-2} 4\cot(\omz)$ concludes the proof.
\end{proof}
For right isosceles triangles, the constant $\cinv=\sqrt{72}$ and all estimates in the proof are sharp.

\begin{coro}[discrete Friedrichs inequality]\label{cor:dF}
Any $\vcr\in\CR$ and the constants $\cdf = \hmax\cjc(J_1) + \cf(1+\cinv \cjc(J_1))$ and $\cf=\operatorname{width}(\Omega)/\pi$  satisfy
 \begin{equation*}
  \NormLtwoOm{\vcr} \leq \cdf \NormEnergyncOnly{\vcr}.
 \end{equation*}
\end{coro}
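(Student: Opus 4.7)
The plan is to split $\vcr = (\vcr - J_1\vcr) + J_1\vcr$ and estimate the two summands separately, using that $J_1\vcr \in \Soz \subseteq H^1_0(\Omega)$ is admissible for the continuous Friedrichs inequality, while the difference is controlled by the enrichment estimate of Theorem \ref{thm:interror} (applied to $\Jc = J_1$ with the improved constant $\cjc(J_1)$ of Lemma \ref{lem:caseJ1}).

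First I would bound $\NormLtwoOm{\vcr - J_1\vcr}$ by pulling the maximal mesh-size out of the weighted $L^2$ norm: since $\htc \leq \hmax$,
\begin{equation*}
\NormLtwoOm{\vcr - J_1\vcr} \leq \hmax \NormLtwoOm{\htc^{-1}(\vcr - J_1\vcr)} \leq \hmax\, \cjc(J_1)\, \NormEnergyncOnly{\vcr}
\end{equation*}
directly from Theorem \ref{thm:interror}. Second, since $J_1\vcr$ is a conforming $H^1_0$ function, the continuous Friedrichs inequality with constant $\cf = \operatorname{width}(\Omega)/\pi$ gives $\NormLtwoOm{J_1\vcr} \leq \cf\, \NormEnergyOn{J_1\vcr}{\Omega}$.

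The last ingredient is to control $\NormEnergyOn{J_1\vcr}{\Omega}$ by $\NormEnergyncOnly{\vcr}$. By the triangle inequality,
\begin{equation*}
\NormEnergyOn{J_1\vcr}{\Omega} \leq \NormEnergyncOnly{\vcr} + \NormEnergyncOnly{\vcr - J_1\vcr}.
\end{equation*}
On each $T\in\Tcal$ the difference $(\vcr - J_1\vcr)|_T$ lies in $P_1(T)$, hence the inverse estimate of Lemma \ref{lem:invEst} applies elementwise; summing over $T\in\Tcal$ and invoking Theorem \ref{thm:interror} once more yields
\begin{equation*}
\NormEnergyncOnly{\vcr - J_1\vcr}^2 \leq \cinv^2 \NormLtwoOm{\htc^{-1}(\vcr - J_1\vcr)}^2 \leq \cinv^2\, \cjc(J_1)^2\, \NormEnergyncOnly{\vcr}^2.
\end{equation*}
Combining the three estimates delivers
\begin{equation*}
\NormLtwoOm{\vcr} \leq \bigl(\hmax\,\cjc(J_1) + \cf(1 + \cinv\,\cjc(J_1))\bigr) \NormEnergyncOnly{\vcr} = \cdf\, \NormEnergyncOnly{\vcr}.
\end{equation*}

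There is no real obstacle: the argument is the standard enrichment-plus-Friedrichs trick, and every building block is already at hand. The only point that requires a moment of care is verifying that the inverse estimate is legitimately applicable to $\vcr - J_1\vcr$, which follows from $\vcr|_T, (J_1\vcr)|_T \in P_1(T)$, so that the difference is piecewise affine and the hypothesis of Lemma \ref{lem:invEst} is met on each simplex.
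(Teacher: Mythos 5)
Your proposal is correct and follows essentially the same route as the paper: split off the enrichment $J_1\vcr$, bound the difference by $\hmax\cjc(J_1)$ via Lemma \ref{lem:caseJ1}, and control $\NormEnergy{J_1\vcr}$ through the triangle inequality combined with the elementwise inverse estimate of Lemma \ref{lem:invEst} before applying the continuous Friedrichs inequality. No gaps.
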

\begin{proof}
 Given $\vcr\in\CR$, let $\vc=J_1(\vcr)$ for the enrichment operator $J_1$ from Remark \ref{rem:examples} so that Lemma \ref{lem:caseJ1} shows 
\begin{equation*}
 \NormLtwoOm{\vcr -\vc} \leq \hmax\cjc(J_1) \NormEnergyncOnly{\vcr}.
\end{equation*}
Lemma \ref{lem:invEst}, the Friedrichs inequality $\NormLtwoOm{\vc}\leq \diam(\Om) \NormEnergy{\vc}/\pi$, and the triangle inequality yield
\begin{align*}
 \NormLtwoOm{\vc}&\leq \cf\NormEnergyncOnly{\vc} \leq\cf (\NormEnergy{\vcr} + \cinv \NormLtwoOm{\htc\inv (\vc-\vcr)})\\
&\leq \cf(1+\cinv \cjc(J_1))\NormEnergyncOnly{\vcr}.\numberthis\label{eqn:stabEst}
\end{align*}
The triangle inequality $\NormLtwoOm{\vcr} \leq \NormLtwoOm{\vcr -\vc} + \NormLtwoOm{\vc}$ concludes the proof.
\end{proof}

\section{Quasi-Interpolation}\label{sec:quasiinterpolation}
This section proves an estimate for a quasi-interpolation operator $J:H^1_0(\Om)\rightarrow S^1_0(\Tcal)$ as conclusion of the enrichment operator of Section~\ref{sec:enrichment}.
For $n=2$, let $\Inc:H^1_0(\Om)\rightarrow \CR$ denote the non-conforming interpolation operator with $(\Inc v)(\Mid(E))=\intmean_E v \ds$ for all $E\in\Ecal$ and $v\in H^1_0(\Om)$.

\begin{theorem}[Quasi-interpolation]\label{thm:QI}
 The bounded linear projection $J:= J_C\circ \Inc: H^1_0(\Om)\rightarrow S^1_0(\Tcal)$ for any $\Jc:\CR\rightarrow \Soz$ with \eqref{eqn:convCondition} and any $v\in H^1_0(\Omega)$ satisfy
\begin{align*}
 \NormLtwoOm{\htc\inv(1-J)v} &\leq (\kappa^2+\cjc^2)^{1/2} \NormEnergy{v}\text{ and }\\
 \NormEnergy{Jv},\NormEnergy{(1-J)v} &\leq \cf(1+\cinv \cjc)\NormEnergy{v}
\end{align*}
with the constant $\kappa=(1/48+1/j_{1,1}^2)^{1/2}$ and the first positive root $\joo$ of the Bessel function of the first kind. 
Additionally, for any $T\in\Tcal$, $f|_{\omega_T}\in S^1(\Tcal(\omega_T))$ implies 
\begin{equation}\label{eqn:localProj}
 f|_T = (Jf)|_T.
\end{equation}
With $\csecond := (\kappa+1)/\joo + (1+\cinv)\com \cjc (1/\joo + \calph)$, $\com = \sin(\omz)^{-\max\{\cbd-1,\cinner/2\}}$, $\calph=\max_{T\in\Tcal,z\in\Ncal(T)}((1/4+2/j_{1,1}^2)/(1-\abs{\cos(\measuredangle(T,z))}))\poh$, any $v\in H^2(\Om)\cap H^1_0(\Om)$ additionally satisfies the second-order approximation property
\begin{equation*}
 \NormLtwoOm{\htc^{-2}(1-J)v} + \NormLtwoOm{\htc\inv \nabla((1-J)v)} \leq \csecond \NormLtwoOm{\D^2 v}.
\end{equation*}

\end{theorem}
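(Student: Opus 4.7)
The plan is to decompose $v - Jv = (v - \Inc v) + (\Inc v - \Jc \Inc v)$ on every $T\in\Tcal$ and combine a refined version of the Discrete Nonconforming Interpolation theorem of Section~\ref{sec:discretePoincare} with Theorem~\ref{thm:interror}. To obtain the sharp $\kappa^2 = 1/48 + 1/\joo^2$, the proof of the nonconforming interpolation theorem is replayed with the \emph{continuous} optimal Poincar\'e constant $1/\joo$ of \cite{laugesensiudeja} instead of $C(n)$: averaging Lemma~\ref{lem:discreteTraceIdentity} over the three vertices of $T$ applied to $w := v - \Inc v$ uses $\intmean_E w\ds = 0$ for every $E\in\Ecal(T)$ and $\int_T(x-M)\dx = 0$ for the centroid $M$ of $T$ to give $\intmean_T w\dx = -(n\abs{T})^{-1}\int_T(x-M)\cdot(1-\Pi_0)\nabla v\dx$. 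Combined with $\NormLtwo{\pbullet - M}{T}^2\le h_T^2\abs{T}/((n+1)(n+2))$ (as already used in the proof of the Discrete Nonconforming Interpolation theorem) this yields $\abs{T}\,\abs{\intmean_T w}^2\le h_T^2/48\,\NormLtwo{(1-\Pi_0)\nabla v}{T}^2$ for $n=2$, and the continuous Poincar\'e bound applied to the zero-mean part of $w$ delivers $\NormLtwo{h_T\inv(v-\Inc v)}{T}\le \kappa\NormLtwo{(1-\Pi_0)\nabla v}{T}$ by Pythagoras. Theorem~\ref{thm:interror} bounds the enrichment term by $\NormLtwoOm{\htc\inv(\Inc v-\Jc\Inc v)}\le \cjc\NormLtwoOm{\nablanc\Inc v} = \cjc\NormLtwoOm{\Pi_0\nabla v}$. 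The triangle inequality combined with the elementary Cauchy--Schwarz $\kappa a + \cjc b\le(\kappa^2+\cjc^2)\poh(a^2+b^2)\poh$ and the $L^2$-orthogonality $\NormEnergy{v}^2 = \NormLtwoOm{(1-\Pi_0)\nabla v}^2 + \NormLtwoOm{\Pi_0\nabla v}^2$ concludes the first inequality.

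The stability is obtained from $\nabla Jv = \nabla\Inc v + \nabla(\Jc\Inc v - \Inc v)$. The first summand satisfies $\NormLtwoOm{\nabla\Inc v} = \NormLtwoOm{\Pi_0\nabla v}\le\NormEnergy{v}$, and the inverse estimate Lemma~\ref{lem:invEst} applied to the piecewise affine difference $\Jc\Inc v - \Inc v$ converts the $L^2$ bound of Theorem~\ref{thm:interror} into $\NormLtwoOm{\nabla(\Jc\Inc v - \Inc v)}\le \cinv\cjc\NormEnergy{v}$. The triangle inequality and the same pattern as in the proof of Corollary~\ref{cor:dF} then yield the stated $\cf(1+\cinv\cjc)$ bound for both $\NormEnergy{Jv}$ and $\NormEnergy{(1-J)v}$.

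Property \eqref{eqn:localProj} is immediate from the two-step construction: any $f|_{\omega_T}\in S^1(\Tcal(\omega_T))$ is continuous, so $\intmean_E f\ds = f(\Mid(E))$ is unambiguous for every side $E\subseteq\omega_T$ and $\Inc f|_{T'} = f|_{T'}$ for every $T'\in\Tcal(\omega_T)$. Hence for every $z\in\Ncal(T)$ the set $\{(\Inc f|_{T'})(z)\,|\, T'\in\Tcal(z)\}$ reduces to the singleton $\{f(z)\}$, so \eqref{eqn:convCondition} enforces $\Jc\Inc f(z) = f(z)$; boundary vertices $z\in\bOm$ give $f(z) = 0 = Jf(z)$ by $f\in H^1_0(\Om)$ and $Jv\in\Soz$; thus the two $P_1$ functions $Jf|_T$ and $f|_T$ coincide at the three vertices of $T$.

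The second-order estimate is the technical heart of the proof. On every $T\in\Tcal$ select an affine polynomial $p_T$ realising the Bramble--Hilbert best approximation of $v$ on $\omega_T$. Since $p_T$ is globally affine, $p_T\in S^1(\Tcal(\omega_T))$ and \eqref{eqn:localProj} gives $(1-J)v|_T = (1-J)(v-p_T)|_T$; the first-order estimate then bounds the $L^2$-norm of $h_T^{-1}(1-J)v$ by a patchwise gradient norm of $v-p_T$, and the inverse estimate Lemma~\ref{lem:invEst} applied to $J(v-p_T)$ handles the $\nabla$-part analogously. What remains is the patchwise Bramble--Hilbert-type bound $\NormLtwo{\nabla^k(v-p_T)}{\omega_T}\le \com\,\calph\, h_T^{2-k}\NormLtwo{\D^2 v}{\omega_T}$ for $k=0,1$: the reference-angle factor $\calph = \max_{T,z}((1/4+2/\joo^2)/(1-\abs{\cos\measuredangle(T,z)}))\poh$ captures the distortion between adjacent reference triangles in the patch, while $\com = \sin(\omz)^{-\max\{\cbd-1,\cinner/2\}}$ arises from composing the triangle-to-triangle transfer along a worst chain sharing a vertex. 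Boundary patches where $v=0$ on part of $\partial\omega_T$ are treated by $p_T=0$ and a Friedrichs-type inequality, absorbed into the same constant. Finite overlap of $\{\omega_T\}_{T\in\Tcal}$ and summation in $T$ conclude the proof; the main obstacle is the careful bookkeeping of the patch constants to arrive at the explicit $\csecond = (\kappa+1)/\joo + (1+\cinv)\com\cjc(1/\joo+\calph)$.
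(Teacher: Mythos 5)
Your treatment of the first-order estimate, the stability bound, and the local projection property \eqref{eqn:localProj} is essentially the paper's argument: the split $v-Jv=(v-\Inc v)+(1-\Jc)\Inc v$, the $L^2$-orthogonality $\nablanc(v-\Inc v)\perp\nabla\Inc v$, Theorem~\ref{thm:interror}, and the inverse estimate are exactly what is used, and your re-derivation of $\kappa$ from the averaged trace identity plus the continuous Poincar\'e inequality is a correct reconstruction of the cited result. One small wrinkle: for $\NormEnergy{(1-J)v}$ you invoke the triangle inequality, which gives $(2+\cinv\cjc)$ rather than the same constant as for $Jv$; the paper instead uses that $J$ is a projection on $H^1_0(\Om)$, so $\norm{1-J}=\norm{J}$ in the operator norm induced by $\NormEnergy{\pbullet}$, which is how both bounds come out identical.

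The second-order estimate, however, has a genuine gap. The paper's key idea is to insert the \emph{nodal} interpolant $Iv\in\Soz$ and exploit $(1-\Jc)Iv=0$, yielding the exact decomposition $(1-J)v=(1-\Inc)v+(1-\Jc)(\Inc v-Iv)$; the three resulting terms are then controlled by the known explicit second-order error estimates $\NormLtwoOm{\htc^{-2}(1-\Inc)v}\le (\kappa/\joo)\NormLtwoOm{\D^2v}$ and $\NormLtwoOm{\htc^{-1}\nabla(1-I)v}\le\calph\NormLtwoOm{\D^2v}$, together with a weighted variant of Theorem~\ref{thm:interror} in which $\com$ enters only through the mesh-size comparison $h_T\le\max_{K\in\Tcal(z)}h_K\le\com h_T$ across a nodal patch. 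Your patchwise Bramble--Hilbert construction misses this idea and cannot deliver the claimed constant: the Bramble--Hilbert constant on a patch $\omega_T$ is exactly the kind of implicit quantity this paper is written to avoid, so "careful bookkeeping" cannot terminate in the explicit $\csecond$; moreover $\calph$ is not a reference-angle distortion factor between adjacent triangles but precisely the constant of the nodal interpolation error estimate, and the additive structure $(\kappa+1)/\joo+(1+\cinv)\com\cjc(1/\joo+\calph)$ is a fingerprint of the nodal-interpolant decomposition. There is also a technical obstruction in your localization step: a nonzero globally affine $p_T$ does not belong to $H^1_0(\Om)$, so \eqref{eqn:localProj} does not apply to it as stated, and the boundary case ("$p_T=0$ and a Friedrichs-type inequality, absorbed into the same constant") is asserted rather than proved.
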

\begin{proof}
 For the proof of the first estimate, the triangle inequality implies 
\begin{equation*}
 \NormLtwoOm{\htc\inv(v - \Jc \Inc v)} \leq \NormLtwoOm{\htc\inv(v - \Inc v)} + \NormLtwoOm{\htc\inv(1 - \Jc) \Inc v)}.
\end{equation*}
The interpolation estimate for the non-conforming interpolation operator with $\kappa=(1/48+1/j_{1,1}^2)^{1/2}= 0.29823$ \cite{CgallistlMorley}, Theorem~\ref{thm:interror},  and the orthogonality of $\nablanc(v-\Inc v)$ onto $\nablanc \Inc v$ in $L^2(\Om)$ yield
\begin{equation*}
 \NormLtwoOm{\htc\inv(v - \Jc \Inc v)} \leq (\kappa^2+\cjc^2)^{1/2} \NormEnergy{v}.
\end{equation*}

For the second estimate, observe that $J:H^1_0(\Om) \rightarrow H^1_0(\Om)$ is a projection in $\big(H^1_0(\Om), (\nabla\pbullet,\nabla\pbullet)_{L^2(\Om)}\big)$ and thus, $\norm{1-J}_{L(H^1_0(\Om);H^1_0(\Om))}=\norm{J}_{L(H^1_0(\Om);H^1_0(\Om))}$ \cite{kato}. Consequently, \eqref{eqn:stabEst} from the proof of the discrete Friedrichs inequality and\\ $\NormEnergyncOnly{\Inc v} \leq \NormEnergy{v}$ show
\begin{align*}
 \norm{J}_{L(H^1_0(\Om);H^1_0(\Om))} \leq  \cf(1+\cinv \cjc).
\end{align*}

For $T\in\Tcal$ and $f|_{\omega_T}\in S^1(\Tcal(\omega_T))$ as in \eqref{eqn:localProj}, any $z\in\Ncal(T)$ satisfies
\begin{equation*}
 (\Jc(\Inc f)) (z) = (\Jc (\Inc f|_{\omega_z})) (z)= (\Jc(f|_{\omega_z})) (z) = f(z).
\end{equation*}

For the proof of the second-order approximation property, let $v\in H^2(\Om)\cap H^1_0(\Om)$ and $Iv\in S^1_0(\Tcal), Iv(z)=v(z)$ the nodal interpolant. $(1-\Jc)Iv = 0$ implies  $(1-J)v= (1-\Inc)v + (1-\Jc)(\Inc v - I v)$. The triangle inequality yields
\begin{align*}
 \NormLtwoOm{\htc^{-2}(1-J)v}&\leq \NormLtwoOm{\htc^{-2}(1-\Inc)v} + \NormLtwoOm{\htc^{-2}(1-\Jc)(\Inc v - I v)}.
\end{align*}
The second-order interpolation errors of non-conforming \cite{CgallistlMorley} and nodal interpolation \cite{CGedickeRim} read
\begin{align*}
 \NormLtwoOm{\htc^{-2}(1-\Inc)v} &\leq \kappa \NormLtwoOm{\htc^{-1}\nablanc(1-\Inc)v} \leq \kappa/\joo \NormLtwoOm{\D^2 v},\\
 \NormLtwoOm{\htc^{-1}\nabla(1-I)v} &\leq \calph \NormLtwoOm{\D^2 v}.
\end{align*}
Consequently, a slight modification of the proof of Theorem \ref{thm:interror} in \eqref{eqn:finalEst} with the estimate $h_T \leq \max_{K\in \Tcal(z)} h_K \leq \com h_T$ for any $z\in\Ncal, T\in \Tcal(z)$, and a triangle inequality implies
\begin{align*}
 \NormLtwoOm{\htc^{-2}(1-\Jc)(\Inc v - I v)} &\leq \com \cjc \NormLtwoOm{\htc^{-1}\nablanc(\Inc v - I v)}\\
& \leq  \com \cjc (1/\joo + \calph) \NormLtwoOm{\D^2 v}.
\end{align*}
This results in the estimate of the first term in the assertion,
\begin{equation*}
 \NormLtwoOm{\htc^{-2}(1-J)v} \leq (\kappa/\joo + \com \cjc (1/\joo + \calph)) \NormLtwoOm{\D^2 v}.
\end{equation*}
The split from above yields 
\begin{equation*}
 \NormLtwoOm{\htc\inv \nabla((1-J)v)} \leq  \NormLtwoOm{\htc\inv \nabla((1-\Inc)v)} +  \NormLtwoOm{\htc\inv \nabla((1-\Jc)(\Inc v - I v))}.
\end{equation*}
The inverse estimate leads to $\NormLtwoOm{\htc\inv \nabla((1-\Jc)(\Inc v - I v))} \leq \cinv \NormLtwoOm{\htc^{-2}(1-\Jc) \allowbreak (\Inc v - I v)}$ and therefore
\begin{equation*}
 \NormLtwoOm{\htc\inv \nabla((1-J)v)} \leq  (1/\joo +  \cinv \com \cjc (1/\joo + \calph))\NormLtwoOm{\D^2 v}.
\end{equation*}

\end{proof}

\begin{remark}[Discrete quasi-interpolation]\label{rem:dQI}
Consider a triangulation $\Tcal$ and refinement $\Tcalf$. For any $\hat v_C\in\Sozf$ and $K\in \mathcal{U}:= \Tcal\cap\Tcalf$, $\Inc\hat v_C|_K = \hat v_C|_K$. Hence, any $K_1,K_2\in \mathcal{U}$ with $z\in\Ncal(K_1)\cap\Ncal(K_2)$ satisfy $\Inc\hat v_C|_{K_1}(z) = \hat v_C (z) = \Inc\hat v_C|_{K_2}(z)$. Consequently, the application of Theorem \ref{thm:QI} with $\Jc = J_{QI}$ from \eqref{eqn:defjQI} yields a discrete quasi-interpolation $J_{dQI}:= J_{QI}\circ \Inc|_{\Sozf}:\Sozf\rightarrow\Soz$ such that any $\hat v_C\in\Sozf$ satisfies $\hat v_C= J_{dQI} \hat v_C$ on $\Tcal\cap\Tcalf$ and
\begin{align}\label{eqn:dQIApprox}
 \NormLtwoOm{\htc\inv(1-J_{dQI})\hat v_C} &\leq (\kappa^2+\cjc^2)^{1/2} \NormEnergy{\hat v_C}.
\end{align}
 A thorough inspection of the proofs of Theorems \ref{thm:interror} and \ref{thm:QI} shows that this interpolation operator can be extended to $J_{dQI}:\Sof\rightarrow\So$ with the same properties and constant \(\cjc^2=(\sqrt{3}/2)\cot(\omz)/ \min\{1-\cos (\pi/\cinner), 1-\cos(\pi/(2\cbd-1))\}\) arising from the eigenvalue problem \cite[Thm.~3.2(viii)]{MR2440234}.
\end{remark}

\section{Constants in the Axioms of Adaptivity}\label{sec:axioms}
This section recapitulates the proof of optimal convergence rates of the Courant and the Crouzeix-Raviart FEM in \(2\)D in the axiomatic framework of \cite{axioms, safem} with explicit constants. Define $a(u,v):= \allowbreak (\nabla u,\nabla v)_{L^2(\Om)}$ for any $v,w\in H^1_0(\Omega)$. Given $f\in\Ltwo{\Om}$, the CFEM seeks $u_C\in\Soz$ with
\begin{equation}\label{eqn:C}
 a(u_C,v_C)=(f,v_C)_{L^2(\Om)}\quad\text{ for any }v_C\in\Soz.
\end{equation}
For any admissible triangulation $\Tcal\in\Admis$ with CFEM solution $u_C\in\CR$ to \eqref{eqn:C} and $K\in\Tcal$, define
\begin{equation*}
 \eta_C^2(\Tcal,K):= \abs{K} \NormLtwoK{f}^2 + \abs{K}\poh \sum_{E\in\Ecal(K)\cap \Ecal(\Omega)} \NormLtwo{\jump{\nabla u_C\cdot\nu_E}}{E}^2.
\end{equation*}
For $\Tcal\in\Admis$ and refinement $\Tcalf$ with solutions $u_C\in\Soz$ and $\hat u_C\in\Sozf$, define
\begin{equation*}
 \delta_C(\Tcal,\Tcalf):= \NormEnergy{u_C-\hat u_C}.
\end{equation*}
The optimality proof of \cite{axioms} relies on the axioms (A1)--(A4) below with constants $0<\Lambda_1, \Lambda_2, \Lambda_3, \Lambda_4<\infty$ and $0<\varrho_2<1$.
Any $\Tcal\in\Admis$ and refinement $\Tcalf$ satisfy Stability (A1)
\begin{equation}\label{eqn:stab}
 \abs{\eta_C(\Tcal,\Tcal\cap \Tcalf)-\eta_C(\Tcalf,\Tcal\cap\Tcalf)} \leq \Lambda_1 \delta_C(\Tcal,\Tcalf)
\end{equation}
and Reduction (A2)
\begin{equation*}
 \eta_C(\Tcalf,\Tcalf\setminus\Tcal) \leq \varrho_2 \eta_C(\Tcal,\Tcal\setminus\Tcalf) + \Lambda_2 \delta_C(\Tcal,\Tcalf).
\end{equation*}

% The remaining part of this section proves an explicit bound for the bulk parameter for the Courant FEM with solution $u_C\in\Soz$ to \((\nabla u_C,\nabla v_C)_{L^2(\Om)} =: a(u_C,v_C)=(f,v_C)_{L^2(\Om)}\) for any \(v_C\in\Soz\). For any admissible triangulation $\Tcal\in\Admis$ and $K\in\Tcal$, define
% \begin{equation*}
%  \eta_C^2(\Tcal,K):= \abs{K} \NormLtwoK{f}^2 + \abs{K}\poh \sum_{E\in\Ecal(K)\cap \Ecal(\Omega)} \NormLtwo{\jump{\nabla u_C\cdot\nu_E}}{E}^2.
% \end{equation*}
% For $\Tcal\in\Admis$ and refinement $\Tcalf$ with solutions $u_C\in\Soz$ and $\hat u_C\in\Sozf$, define
% \begin{equation*}
%  \delta_C(\Tcal,\Tcalf):= \NormEnergy{u_C-\hat u_C}.
% \end{equation*}

Moreover, \cite{axioms} shows discrete reliability (A3) on a simply-connected domain $\Om\ins\R^2$,
\begin{equation}\label{eqn:dRel}
  \delta_C^2(\Tcal,\Tcalf) \leq \Lambda_3 \eta_C^2(\Tcal,\Tcal\setminus\Tcalf).
 \end{equation}
The quasi-orthogonality (A4) shows that the output $\Tcal_k$, $k=1,2,\dots$ of the adaptive algorithm with corresponding quantities $\eta_k:=\eta_C(\Tcal_k,\Tcal_k)$ and any $\ell,m\in \N$ satisfy
 \begin{equation*}
  \sum_{k=\ell}^{\ell+m}\delta_C^2(\Tcal_k,\Tcal_{k+1}) \leq  \Lambda_4 \eta_\ell^2.
 \end{equation*}
The main result \cite[Theorem 4.5]{axioms} and the axioms of adaptivity state that (A1)--(A4) with the above-mentioned constants yield optimal convergence rates of the adaptive Crouzeix-Raviart FEM with Dörfler marking for any bulk parameter
\begin{equation}\label{eqn:theta}
 0<\theta<\theta_0:= (1+\Lambda_1^2\Lambda_3)\inv.
\end{equation}
This is a sufficient condition for optimal rates and requires the quantification of \(\theta_0\) and so to calculate $\Lambda_1$ and $\Lambda_3$ explicitly.

The proof of stability (A1) is essentially contained in \cite{CKNS08} but is included here for explicit gathering of the constants.
\begin{theorem}[Stability (A1) for CFEM]\label{thm:stabC}
 The constants\\ $\cquot:=\max_{K_1,K_2\in\Tcal, \Ecal(K_1)\cap\Ecal(K_2)\neq\emptyset} \abs{K_1}/\abs{K_2}\leq 2\cot(\omz)/\sin(\omz)$ and $\Lambda_1^2= 6\cot\poh(\omz) \allowbreak(1+\cquot\poh)^2$ satisfy \eqref{eqn:stab}.
\end{theorem}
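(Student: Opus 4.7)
The plan is to reduce axiom (A1) to a gradient-jump estimate for the Galerkin difference $w_C := u_C - \hat u_C \in H^1_0(\Om)$. First I would observe that every $K\in\Tcal\cap\Tcalf$ is unrefined in $\Tcalf$, and since a hanging node on a side of $K$ would force $K$ itself to be bisected, each $E\in\Ecal(K)$ remains an edge in $\Tcalf$ with a single neighbour $K'\in\Tcalf$ across $E$. The volume term $|K|\NormLtwoK{f}^2$ in $\eta_C^2(\cdot,K)$ depends only on $K$ and $f$, hence cancels in the difference. Writing $\tilde\eta^2(v,K):=|K|\poh\sum_{E\in\Ecal(K)\cap\Ecal(\Om)}\NormLtwo{\jump{\nabla v\cdot\nu_E}}{E}^2$, the elementary bound $|\sqrt{a+b^2}-\sqrt{a+c^2}|\leq|b-c|$ together with the reverse triangle inequality applied componentwise in $\ell^2$ (first over edges of a fixed $K$, then over $K$) reduces the claim to
\begin{equation*}
\sum_{K\in\Tcal\cap\Tcalf}\tilde\eta^2(w_C,K) \leq \Lambda_1^2 \NormEnergy{w_C}^2.
\end{equation*}

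For each edge $E\in\Ecal(K)\cap\Ecal(\Om)$ I would exploit that both $w_C|_K$ and $w_C|_{K'}$ are affine so that the triangle inequality splits $\NormLtwo{\jump{\nabla w_C\cdot\nu_E}}{E}\leq \NormLtwo{\nabla w_C|_K\cdot\nu_E}{E}+\NormLtwo{\nabla w_C|_{K'}\cdot\nu_E}{E}$, and the constancy of a $P_1$ gradient yields $\NormLtwo{\nabla w_C|_T\cdot\nu_E}{E}^2\leq(|E|/|T|)\NormLtwo{\nabla w_C}{T}^2$ for $T\in\{K,K'\}$. Two shape inputs enter: the estimate $|E|^2/|K|\leq 4\cot(\omz)$, obtained via the law of sines identity $|E|^2/|K|=2(\cot\alpha+\cot\beta)$ for the two angles $\alpha,\beta\geq\omz$ of $K$ adjacent to $E$, and the hypothesis $|K|/|K'|\leq \cquot\leq 2\cot(\omz)/\sin(\omz)$. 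Combining these with the weighted Young inequality $(a+b)^2\leq(1+t)a^2+(1+1/t)b^2$ at $t=\cquot^{1/2}$, and transferring the $K'$-contribution via $|K|\poh/|K'|\poh\leq\cquot^{1/2}$, yields the edgewise bound
\begin{equation*}
|K|\poh\NormLtwo{\jump{\nabla w_C\cdot\nu_E}}{E}^2\leq 2\sqrt{\cot(\omz)}(1+\cquot^{1/2})\bigl(\NormLtwo{\nabla w_C}{K}^2+\NormLtwo{\nabla w_C}{K'}^2\bigr).
\end{equation*}

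Finally, I would sum over the at most three sides of each $K\in\Tcal\cap\Tcalf$ and then over $K$; since each simplex in $\Tcalf$ neighbours at most three unrefined coarse simplices across its edges, an overlapping argument bounds both the $K$-part and the $K'$-part by $3\NormEnergy{w_C}^2$, and the accumulated prefactor $2\sqrt{\cot(\omz)}(1+\cquot^{1/2})\cdot 6$ delivers $\Lambda_1^2\leq 12\sqrt{\cot(\omz)}(1+\cquot^{1/2})\leq 6\sqrt{\cot(\omz)}(1+\cquot^{1/2})^2$, the last step using $\cquot\geq 1$. The main obstacle is the careful trigonometric derivation of the two shape estimates (especially the sharp form $|E|^2/|K|=2(\cot\alpha+\cot\beta)$, without which the $\sqrt{\cot(\omz)}$ scaling would degrade to a weaker $1/\sin(\omz)$) together with the combinatorial bookkeeping of the edge-neighbour overlap so that the stated constant emerges cleanly.
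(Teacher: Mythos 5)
Your proof is correct and follows essentially the same route as the paper: reverse triangle inequalities reduce (A1) to bounding the edge jumps of $\nablanc(u_C-\hat u_C)$, these are controlled through the traces of the two neighbouring piecewise-constant gradients via $\abs{E}^2/\abs{T}\leq 4\cot(\omz)$ and the area ratio $\cquot$, and a finite-overlap factor $3$ produces $\Lambda_1^2$ (your weighted Young inequality with $t=\cquot\poh$ even yields the marginally sharper intermediate bound $12\cot\poh(\omz)(1+\cquot\poh)$ before relaxing to the stated constant). The only omission is the verification of the asserted bound $\cquot\leq 2\cot(\omz)/\sin(\omz)$, which you treat as a hypothesis but which follows in one line from $\abs{K}\leq\abs{E}h_K/2\leq\abs{E}^2/(2\sin(\omz))$ and $\abs{K'}\geq\abs{E}^2/(4\cot(\omz))$ --- the same shape estimates you already derive.
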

\begin{proof}
 The reverse triangle inequality for vectors with entries $\abs{T}^{1/4} \NormLtwo{\jump{\nabla u_C\cdot\nu_E}_E}{E}$ resp. $\abs{T}^{1/4} \NormLtwo{\jump{\partial\hat u_C/\partial s}_E}{E}$ for any $T\in\Tcal\cap\Tcalf$ and $E\in\Ecal(T)$ shows
 \begin{align*}
  \abs{\eta_C(\Tcal,\Tcal\cap \Tcalf)&-\eta_C(\Tcalf,\Tcal\cap\Tcalf)}^2\\
  &\leq \sum_{T\in \Tcal\cap\Tcalf} \sum_{E\in\Ecal(T)} \abs{T}\poh \big(\NormLtwo{\jump{\nabla u_C\cdot\nu_E}_E}{E} - \NormLtwo{\jump{\nabla \hat u_C\cdot\nu_E}_E}{E}\big)^2.
 \end{align*}
Furthermore, the reverse triangle inequality in $L^2(E)$ imply that any $T\in\Tcal\cap\Tcalf$ and $E\in\Ecal(T)$ satisfy
\begin{align*}
 \big\vert \NormLtwo{\jump{\nabla u_C\cdot\nu_E}_E}{E} - \NormLtwo{\jump{\nabla \hat u_C\cdot\nu_E}_E}{E} \big\vert&\leq \NormLtwo{\jump{\nablanc (u_C-\hat u_C)}_E}{E}.
\end{align*}
The triangle inequality and the trace identity shows that $\hat p_0:=\nablanc (u_C-\hat u_C)\in P_0(\Tcalf;\R^2)$ satifies on $\partial T_+ \cap\partial T_-=E\in\Ecalf(\Om)$ with $\hat T_+,\hat T_-\in\Tcalf$,
\begin{align*}
  \NormLtwo{\jump{\hat p_0}_E}{E}^2&\leq (\NormLtwo{\hat p_0|_{T_+}}{E}+\NormLtwo{\hat p_0|_{T_-}}{E})^2\\
 &= \abs{E} (\abs{\hat T_+}\pmoh\NormLtwo{\hat p_0}{T_+}+\abs{\hat T_-}\pmoh\NormLtwo{\hat p_0}{T_-})^2\\
&\leq \abs{E} (\abs{\hat T_+}\inv + \abs{\hat T_-}\inv) \NormLtwo{\hat p_0}{\hat\omega_E}^2.
\end{align*}
The estimates $\abs{\hat T_+}\poh + \abs{\hat T_-}\poh\leq \abs{\hat T_-}\poh(1+\cquot\poh)$ and $\abs{\hat T_\pm}\pmoh \leq 2\cot\poh(\omz) \abs{E}\inv$ show
\begin{align*}
 (\abs{\hat T_+}\poh + \abs{\hat T_-}\poh)&\abs{E} (\abs{\hat T_-}\inv + \abs{\hat T_+}\inv) \\
&\leq \abs{E}(1+\cquot\poh)(\abs{\hat T_-}\pmoh+\abs{\hat T_+}\inv \abs{\hat T_-}\poh)\\
&\leq 2\cot\poh(\omz) (1+\cquot\poh)(1+\abs{\hat T_+}\pmoh\abs{\hat T_-}\poh)\\
&\leq 2\cot\poh(\omz) (1+\cquot\poh)^2 =:\csr.
\end{align*}
% % DISKUSSION DIESER ABSCHAETZUNG
% % Variante I
% \begin{align*}
% &\abs{E}(1+\cquot\poh)(\abs{\hat T_+}\inv \abs{\hat T_-}\poh + \abs{\hat T_-}\pmoh)\\
%  &\leq \abs{E}(1+\cquot\poh)\abs{\hat T_+}\pmoh (\abs{\hat T_+}\pmoh \abs{\hat T_-}\poh + \abs{\hat T_+}\poh\abs{\hat T_-}\pmoh)\\
%  &\leq 2\cot\poh(\omz) (1+\cquot\poh) 2\cquot\poh
% \end{align*}
% % Variante II
% \begin{align*}
% &\abs{E}(1+\cquot\poh)(\abs{\hat T_+}\inv \abs{\hat T_-}\poh + \abs{\hat T_-}\pmoh)\\
%  &\leq \abs{E}(1+\cquot\poh)\abs{\hat T_-}\pmoh (\abs{\hat T_+}\inv \abs{\hat T_-} + 1)\\
%  &\leq 2\cot\poh(\omz) (1+\cquot\poh)(1+\cquot)
% \end{align*}
% % Variante III (CC Notizen)
% \begin{align*}
% &\abs{E}(1+\cquot\poh)(\abs{\hat T_+}\inv \abs{\hat T_-}\poh + \abs{\hat T_-}\pmoh)\\
%  &\leq \abs{E}(1+\cquot\poh)(\abs{\hat T_+}\pmoh (\abs{\hat T_+}\pmoh\abs{\hat T_-}\poh) + \abs{\hat T_-}\pmoh)\\
%  &\leq 2\cot\poh(\omz) (1+\cquot\poh)(1+\cquot\poh)
% \end{align*}
% % Im rechtwinklig-gleichschenkligen Fall cquot = 2, und da 2*sqrt(2) = 2.83, 1+2= 3 und 1+sqrt(2)=2.41, ist Variante III am besten

The estimates $\abs{\hat T_\pm}\inv \leq 4\cot(\omz) \abs{E}^{-2}$, $\abs{\hat T_\pm}\leq \abs{E}h_{\hat T_\pm} /2$, and $h_{\hat T_\pm}\leq \abs{E}/\sin(\omz)$ imply $\cquot\leq 2\cot(\omz)/\sin(\omz)$.

The summation over $\Tcal\cap\Tcalf$ and the finite overlap of $(\hat\omega_E)_{E\in\Ecalf}$ leads to
\begin{align*}
  \abs{\eta_C(\Tcal,\Tcal\cap \Tcalf)-\eta_C(\Tcalf,\Tcal\cap\Tcalf)}^2&\leq \csr \sum_{E\in\Ecalf} \NormLtwo{\nablanc(u_C-\hat u_C)}{\hat\omega_E}^2\\
  &\leq 3\csr \NormLtwo{\nablanc(u_C-\hat u_C)}{\Omega}^2.
 \end{align*}
\end{proof}

\begin{theorem}[Discrete reliability (A3) for CFEM]
 The constant $\Lambda_3=4\cot(\omz)(\kappa^2+\cjc^2) (1+6\cot(\omz)\poh(1+\cinv)) $ satisfies \eqref{eqn:dRel}.
\end{theorem}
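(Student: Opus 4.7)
The plan is to adapt the residual-based discrete reliability argument of \cite{CKNS08}, using the discrete quasi-interpolation $J_{dQI}\colon\Sozf\to\Soz$ from Remark~\ref{rem:dQI} as the projection into the coarse space and \eqref{eqn:dQIApprox} as its approximation estimate. With $\mathcal{R}:=\Tcal\setminus\Tcalf$, $e:=\hat u_C-u_C\in\Sozf$, $v_C:=J_{dQI}e\in\Soz$, and $w:=(1-J_{dQI})e\in\Sozf$, two properties are central: (a) $v_C\in\Soz\subseteq\Sozf$ is admissible in the CFEM equation on $\Tcal$ and on $\Tcalf$, so Galerkin orthogonality $a(e,v_C)=0$ gives $\NormEnergy{e}^2=a(e,w)$; (b) by Remark~\ref{rem:dQI}, $w$ vanishes on every unrefined element $K\in\Tcal\cap\Tcalf$ and, by continuity in $H^1_0(\Om)$, on every edge separating $\mathcal R$ from the unrefined region.

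I would then use $a(\hat u_C,w)=(f,w)_{L^2(\Om)}$ for $w\in\Sozf$, piecewise integration by parts on $\Tcal$ with $\Delta u_C|_K=0$, and the support property of $w$ to obtain
\begin{equation*}
 \NormEnergy{e}^2 = \sum_{K\in\mathcal R}\int_K fw\dx - \sum_{E\in\Ecal(\Om),\,E\subset\overline{\bigcup\mathcal R}}\int_E\jump{\nabla u_C\cdot\nu_E}_E\,w\ds.
\end{equation*}
The volume contribution is bounded by Cauchy--Schwarz, the shape-regularity bound $h_K^2\leq4\cot(\omz)\abs K$, and \eqref{eqn:dQIApprox}, yielding $\sum_{K}\NormLtwoK{f}\NormLtwoK{w}\leq 2\cot^{1/2}(\omz)(\kappa^2+\cjc^2)^{1/2}\eta_C(\Tcal,\mathcal R)\NormEnergy{e}$. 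For the edge contribution, I would pair each edge $E$ with an adjacent $K\in\mathcal R$, insert the weighting $\abs K^{1/4}\cdot\abs K^{-1/4}$, and apply a weighted Cauchy--Schwarz so that the first factor sums into $\eta_C(\Tcal,\mathcal R)$; the second factor is controlled through the scaled trace inequality $\NormLtwo{w}{E}^2\leq h_K^{-1}\NormLtwoK{w}^2+h_K\NormEnergyOn{w}{K}^2$ combined with $\abs K^{-1/2}\leq 2\cot^{1/2}(\omz)h_K^{-1}$. Summation over at most three edges per refined element yields
\begin{equation*}
 \sum\abs K^{-1/2}\NormLtwo{w}{E}^2\leq 6\cot^{1/2}(\omz)\bigl(\NormLtwoOm{\htc^{-1}w}^2+\NormEnergy{w}^2\bigr),
\end{equation*}
where the first term is again controlled by \eqref{eqn:dQIApprox} and the second by the inverse estimate of Lemma~\ref{lem:invEst} applied to the coarse piece $v_C\in\Soz\subseteq P_1(\Tcal)$ together with \eqref{eqn:dQIApprox}, producing the clean factor $(1+\cinv)$.

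Assembling volume and edge contributions against the decomposition of $\eta_C^2(\Tcal,\mathcal R)$ into volume and jump parts and dividing by $\NormEnergy{e}$ collects the constants into $\Lambda_3=4\cot(\omz)(\kappa^2+\cjc^2)(1+6\cot^{1/2}(\omz)(1+\cinv))$. The principal obstacle is to extract the factor $(1+\cinv)$ in the bound on $\NormEnergy{w}$ without a stray $\cjc$: because $w|_K$ is only piecewise $P_1$ on the refined submesh $\Tcalf(K)$, the coarse-mesh inverse estimate of Lemma~\ref{lem:invEst} cannot be applied to $w$ directly. Instead, one must apply the inverse estimate to the coarse piece $v_C\in\Soz$, use the triangle inequality $\NormLtwoK{v_C}\leq\NormLtwoK{e}+\NormLtwoK{w}$ together with \eqref{eqn:dQIApprox} for the $w$-part, and combine the $\NormLtwoK{e}$-contribution with the $\eta_C$-factor via Cauchy--Schwarz in $\R^2$ in such a way that it fuses into the leading $(\kappa^2+\cjc^2)$-prefactor rather than multiplying it.
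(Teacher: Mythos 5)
Your overall architecture coincides with the paper's: set $w=\hat e_C-e_C$ with $e_C=J_{dQI}\hat e_C$, use Galerkin orthogonality and piecewise integration by parts over $\Tcal$, split into a volume residual and normal-jump terms supported on $\Tcal\setminus\Tcalf$, and control the weighted $L^2$ norm of $w$ by \eqref{eqn:dQIApprox}. The volume term is handled exactly as in the paper. The problems are in the edge term. First, the trace inequality you invoke, $\NormLtwo{w}{E}^2\leq h_K^{-1}\NormLtwoK{w}^2+h_K\NormEnergyOn{w}{K}^2$, does not hold with constant one; the paper instead derives \eqref{eqn:traceInequality} from the discrete trace identity (Lemma~\ref{lem:discreteTraceIdentity} applied to $v=w^2$), which gives $\abs{E}\inv\NormLtwo{w}{E}^2\leq\abs{T}\inv\big(\NormLtwoT{w}^2+h_T\NormLtwoT{w}\NormEnergync{w}{T}\big)$ with explicit constants, and it is this route that produces the stated value of $\Lambda_3$.

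Second, and more seriously, your treatment of the gradient contribution does not close. You rightly observe that Lemma~\ref{lem:invEst} is stated for $p_1\in P_1(T)$ while $w|_K$ is only piecewise affine on $\Tcalf(K)$, but your workaround --- inverse estimate applied to $v_C=J_{dQI}\hat e_C$ plus the triangle inequality $\NormLtwoK{v_C}\leq\NormLtwoK{\hat e_C}+\NormLtwoK{w}$ --- leaves the term $\NormLtwoOm{\htc\inv\hat e_C}$ uncontrolled: neither \eqref{eqn:dQIApprox} nor the discrete Friedrichs inequality (which carries no $\htc\inv$ weight) bounds it by $\NormEnergy{\hat e_C}$, and the assertion that it ``fuses into the leading $(\kappa^2+\cjc^2)$ prefactor'' is not an argument. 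The paper avoids the detour entirely: in the trace inequality the gradient term enters with a free Young parameter $\lambda$, the inverse estimate is applied to the difference $w$ itself to convert $\NormEnergync{w}{T}$ back into $\NormLtwoT{\htc\inv w}$, and the minimization $\min_{\lambda>0}\big((2\lambda)\inv+\cinv^2\lambda/2\big)=\cinv$ yields $\ctr^2=4\cot(\omz)(1+\cinv)$, so the whole edge contribution collapses onto $\NormLtwoOm{\htc\inv w}$ and hence onto \eqref{eqn:dQIApprox}. If one wishes to be scrupulous about the domain of the inverse estimate, the correct repair is to apply it on the fine elements of $\Tcalf(K)$ (or to extend Lemma~\ref{lem:invEst} to continuous piecewise affine functions on a refinement of $T$), not to pass to $v_C$.
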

\begin{proof}
 With solution $u_C\in\Soz$ (resp. $\hat u_C\in\Sozf$) to the discrete problem with respect to $\Tcal\in\Admis$ (resp. $\Tcalf\in\Admis(\Tcal)$), define $\hat e_C := \hat u_C-u_C$ and discrete quasi-interpolation $e_C\in \So$ of $\hat e_C\in \Sof$ from Remark \ref{rem:dQI}. The Galerkin orthogonality $a(\hat e_C,e_C)=0$, $\hat e_C - e_C=0$ on $\Tcal\cap\Tcalf$ and a piecewise integration by parts show
\begin{align*}
 \delta_C^2(\Tcal,\Tcalf)&= a(\hat u_C, \hat e_C - e_C) - a(u_C, \hat e_C - e_C) \\
&= \int_{\Tcal\setminus\Tcalf} (h_\Tcal f)h_\Tcal\inv(\hat e_C - e_C) \dx \\
&\quad\quad- \sum_{E\in\Ecal(\Om)\cap\Ecal(\Tcal\setminus\Tcal)} \int_E \jump{\nabla u_C\cdot\nu_E} (\hat e_C - e_C)\ds.
\end{align*}
The Cauchy and the trace inequality \eqref{eqn:traceInequality} prove
\begin{equation*}
 \delta_C^2(\Tcal,\Tcalf) \leq (\NormLtwo{h_\Tcal f}{\Tcal\setminus\Tcalf} + \sqrt{3}\ctr \sqrt{\sum_{E\in\Ecal(\Tcal\setminus\Tcalf)} \abs{E} \NormLtwo{\jump{\nabla u_C\cdot \nu_E}_E}{E}^2})\NormLtwoOm{h_\Tcal\inv(\hat e_C - e_C)}.
\end{equation*}
The estimates $h_K^2\leq 4\cot(\omz)\abs{K}$, $\abs{E}\leq 2\cot(\omz)\poh \abs{K}\poh$ for any $K\in\Tcal$ and the first-order approximation property \eqref{eqn:dQIApprox} prove the assertion with $\Lambda_3 = (\kappa^2+\cjc^2)(4\cot(\omz) + 6\ctr^2 \cot(\omz)\poh)$.
\end{proof}

\begin{example}
For right isosceles triangles, $\Lambda_1^2\leq 40.36$, $\Lambda_3\leq 9201 $ and \eqref{eqn:theta} lead to $\theta_0\geq 2.6 \times 10^{-6}$ for the Courant FEM, despite the general wisdom that \(\theta=0.3\) leads to optimal convergence.
% cinv = sqrt(72)
% capx = 3.3729  (capx^2 = 11.3765)
% kappa = 0.2982  (kappa^2 = 0.0889)
%  4\cot(\omz)(\kappa^2+\cjc^2) (1+6\cot(\omz)\poh(1+\cinv)) = 4 * 1 * 11.4654 * ( 1 + 6 * 9.4853) = 2655.92
\end{example}

% \section{Application to Optimality Proof of CFEM}
The remaining part of this section proves an explicit bound for the bulk parameter for the Crouzeix-Raviart FEM with solution $u\crr\in\CR$ to \(a\nc(u\crr,v\crr)=(f,v\crr)_{L^2(\Om)}\) for any \(v\crr\in\CR\) with $a\nc(v\crr,w\crr):= \allowbreak (\nablanc v\crr,\nablanc w\crr)_{L^2(\Om)}$. For any admissible triangulation $\Tcal\in\Admis$ and $K\in\Tcal$, define
\begin{equation*}
 \eta\crr^2(\Tcal,K):= \abs{K} \NormLtwoK{f}^2 + \abs{K}\poh \sum_{E\in\Ecal(K)} \NormLtwo{\jump{\dell u\crr/\dell s}}{E}^2.
\end{equation*}
For $\Tcal\in\Admis$ and refinement $\Tcalf$ with solutions $u\crr\in\CR$ and $\hat u\crr\in\CRf$, define
\begin{equation*}
 \delta\crr(\Tcal,\Tcalf):= \NormEnergyncOnly{u\crr-\hat u\crr}.
\end{equation*}

The proof of stability (A1) from Theorem \ref{thm:stabCR} applies verbatim with $\dell/\dell\nu_E$ replaced by $\tau_E$ in $\dell/\dell s$.
\begin{theorem}[Stability (A1) for CRFEM]\label{thm:stabCR}
 The constants $\cquot$ from Theorem~\ref{thm:stabC} and $\Lambda_1^2= 48\cot(\omz)(2\sin(\omz))\pmoh$ satisfy \eqref{eqn:stab}.
\end{theorem}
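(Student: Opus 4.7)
The plan is to re-run the proof of Theorem~\ref{thm:stabC} with the normal derivative $\dell/\dell\nu_E$ replaced by the tangential derivative $\dell/\dell s$. Since both the normal and tangential projections of the piecewise-constant gradient jump satisfy the same pointwise bound $|\hat p_0\cdot\tau_E|\leq|\hat p_0|$ for $\hat p_0:=\nablanc(u\crr-\hat u\crr)\in P_0(\Tcalf;\R^2)$, the analytic content of the CFEM argument transfers with only cosmetic change. Two adjustments are required: the sum in $\eta\crr$ runs over \emph{all} sides $\Ecal(K)$, so boundary sides must be handled separately, and the shape-regularity bookkeeping has to be carried out to obtain the stated $\Lambda_1$.

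First, I would note that the data-oscillation contribution $\abs{K}\NormLtwoK{f}^2$ is identical in $\eta\crr(\Tcal,K)$ and $\eta\crr(\Tcalf,K)$ for $K\in\Tcal\cap\Tcalf$ and cancels. The reverse triangle inequality in $\ell^2$ over the pairs $(T,E)$ with $T\in\Tcal\cap\Tcalf$ and $E\in\Ecal(T)$, followed by the reverse triangle inequality in $L^2(E)$, reduces the claim to
\begin{equation*}
 \abs{\eta\crr(\Tcal,\Tcal\cap\Tcalf)-\eta\crr(\Tcalf,\Tcal\cap\Tcalf)}^2 \leq \sum_{T\in\Tcal\cap\Tcalf}\sum_{E\in\Ecal(T)}\abs{T}\poh \NormLtwo{\jump{\hat p_0\cdot \tau_E}_E}{E}^2.
\end{equation*}

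Next, I would split the edge sum into interior and boundary sides. On an interior side $E=\dell \hat T_+\cap\dell\hat T_-\in\Ecal(\Om)$ the Cauchy--Schwarz argument of Theorem~\ref{thm:stabC} applies verbatim with $\tau_E$ in place of $\nu_E$, yielding $\NormLtwo{\jump{\hat p_0\cdot\tau_E}_E}{E}^2\leq \abs{E}(\abs{\hat T_+}\inv+\abs{\hat T_-}\inv)\NormLtwo{\hat p_0}{\hat\omega_E}^2$. On a boundary side $E\in\Ecal(\dell\Om)$ with unique adjacent $\hat T\in\Tcalf$, the same computation collapses to $\NormLtwo{\hat p_0\cdot\tau_E|_{\hat T}}{E}^2\leq\abs{E}\abs{\hat T}\inv \NormLtwo{\hat p_0}{\hat T}^2$.

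After multiplying by $\abs{T}\poh$ and inserting the shape-regularity bounds $\abs{\hat T}\pmoh\leq 2\cot\poh(\omz)\abs{E}\inv$ and $\cquot\leq 2\cot(\omz)/\sin(\omz)$ from Theorem~\ref{thm:stabC}, the finite overlap of the patches $(\hat\omega_E)_{E\in\Ecalf}$ collapses the sum to a constant multiple of $\NormLtwoOm{\hat p_0}^2=\delta\crr(\Tcal,\Tcalf)^2$. The main obstacle is not the analytic content but the careful bookkeeping of shape-regularity constants to produce exactly $\Lambda_1^2=48\cot(\omz)(2\sin(\omz))\pmoh$; the treatment of boundary sides, where the factor $\abs{T}\poh\cdot\abs{E}\cdot\abs{\hat T}\inv$ contributes the extra $(2\sin(\omz))\pmoh$, is the only structurally new step relative to the CFEM case.
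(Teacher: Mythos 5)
Your proposal is essentially the paper's proof: the paper devotes exactly one sentence to this theorem, declaring that the proof of Theorem~\ref{thm:stabC} applies verbatim with $\dell/\dell\nu_E$ replaced by the tangential derivative $\dell/\dell s$, which is precisely the route you take, and your extra observation that $\eta\crr$ sums over \emph{all} sides of $K$ so that boundary sides need the one-triangle version of the jump estimate is a genuine point the paper silently glosses over. Note, however, that neither you nor the paper actually carries out the bookkeeping to the stated value $\Lambda_1^2=48\cot(\omz)(2\sin(\omz))\pmoh$ (which does not visibly arise from substituting $\cquot\leq 2\cot(\omz)/\sin(\omz)$ into the CFEM formula $6\cot\poh(\omz)(1+\cquot\poh)^2$), so your attribution of the factor $(2\sin(\omz))\pmoh$ to the boundary sides remains an unverified conjecture.
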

\begin{theorem}[Discrete reliability (A3) for CRFEM]
 For a simply-connected domain $\Omega\subset\R^2$, the constant $\Lambda_3= 12\cot(\omz) (\kappa^2+\cjc^2)(1+\cinv)$ satisfies \eqref{eqn:dRel}.
\end{theorem}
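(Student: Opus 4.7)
The proof parallels the CFEM reliability theorem above but uses a discrete Helmholtz decomposition enabled by the simple connectivity of \(\Om\). Set \(v\crr:=\hat u\crr-u\crr\), so that \(\nablanc v\crr\in P_0(\Tcalf;\R^2)\). Integration by parts on \(\Tcalf\) combined with the midpoint identity of \(\CRf\) (the jump \(\jump{\cdot}_E\) has zero mean on each edge) and the edgewise constancy of \(\partial_\tau\) on \(\Sof\)-functions proves \(L^2\)-orthogonality of \(\nablanc\CRf\) and \(\Curl(\Sof/\R)\); a dimension count via Euler's formula shows these span all of \(P_0(\Tcalf;\R^2)\). Hence the orthogonal discrete Helmholtz decomposition \(\nablanc v\crr=\nablanc\hat\alpha\crr+\Curl\hat\beta_C\) with \(\hat\alpha\crr\in\CRf\) and \(\hat\beta_C\in\Sof/\R\) exists uniquely, and Pythagoras gives \(\delta\crr^2(\Tcal,\Tcalf)=\NormEnergyncOnly{\hat\alpha\crr}^2+\NormLtwoOm{\Curl\hat\beta_C}^2\).

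\textbf{Gradient part.} The admissible test \(\hat\alpha\crr\in\CRf\) and Galerkin on \(\Tcalf\) give \((\nablanc\hat u\crr,\nablanc\hat\alpha\crr)=(f,\hat\alpha\crr)\). For \(u\crr\), the integral-mean definition of \(\Inc\) implies that \(\nablanc\Inc\hat\alpha\crr\) is the \(L^2\)-projection of \(\nablanc\hat\alpha\crr\) onto \(P_0(\Tcal;\R^2)\); since \(\nablanc u\crr\in P_0(\Tcal;\R^2)\), Galerkin on \(\Tcal\) yields \((\nablanc u\crr,\nablanc\hat\alpha\crr)=(\nablanc u\crr,\nablanc\Inc\hat\alpha\crr)=(f,\Inc\hat\alpha\crr)\). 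Subtraction gives \(\NormEnergyncOnly{\hat\alpha\crr}^2=(f,\hat\alpha\crr-\Inc\hat\alpha\crr)_{L^2(\Tcal\setminus\Tcalf)}\), the integrand vanishing on any \(T\in\Tcal\cap\Tcalf\) where \(\hat\alpha\crr|_T\in P_1\) already agrees with its \(\Inc\)-image. Cauchy--Schwarz combined with the nonconforming interpolation estimate, packaged as the QI constant \((\kappa^2+\cjc^2)^{1/2}\) in Theorem~\ref{thm:QI}, bounds \(\NormEnergyncOnly{\hat\alpha\crr}\) by \((\kappa^2+\cjc^2)^{1/2}\NormLtwo{h_\Tcal f}{\Tcal\setminus\Tcalf}\).

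\textbf{Curl part and assembly.} Set \(\beta_C:=J_{dQI}(\hat\beta_C)\in S^1(\Tcal)/\R\) via the extension in Remark~\ref{rem:dQI}, so \(\beta_C=\hat\beta_C\) on \(\Tcal\cap\Tcalf\). The orthogonality argument of the first paragraph---applied either on \(\Tcal\) (giving \((\nablanc u\crr,\Curl\beta_C)=0\)) or on \(\Tcalf\) using \(\beta_C\in\Sof\) (giving \((\nablanc\hat u\crr,\Curl\beta_C)=0\))---proves \((\nablanc v\crr,\Curl\beta_C)=0\), whence \(\NormLtwoOm{\Curl\hat\beta_C}^2=-(\nablanc u\crr,\Curl(\hat\beta_C-\beta_C))\). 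Integration by parts on \(\Tcal\)-elements, where \(u\crr|_T\in P_1\), together with the midpoint expansion \(\jump{u\crr}_E(x)=\jump{\partial u\crr/\partial s}_E\,(x-\Mid(E))\) on each refined \(E\in\Ecal(\Tcal\setminus\Tcalf)\), reduces the right-hand side to an edge sum coupling the tangential-derivative jumps \(\jump{\partial u\crr/\partial s}_E\) to the midpoint kinks of \(\partial_\tau(\hat\beta_C-\beta_C)\). Edgewise Cauchy--Schwarz, the inverse estimate of Lemma~\ref{lem:invEst}, and the dQI approximation \eqref{eqn:dQIApprox} then produce \(\NormLtwoOm{\Curl\hat\beta_C}^2\leq 12\cot(\omz)(\kappa^2+\cjc^2)(1+\cinv)\,\eta\crr^2(\Tcal,\Tcal\setminus\Tcalf)\) (with the stated numerical factor), and summing with the gradient bound concludes. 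The main obstacle is this Curl estimate, whose delicate point is the integration by parts on the coarser mesh \(\Tcal\) rather than \(\Tcalf\): only by matching the \emph{piecewise-constant kink} of \(\partial_\tau(\hat\beta_C-\beta_C)\) at midpoints of refined edges against the linear profile of \(\jump{u\crr}_E\) does the tangential-jump part of \(\eta\crr\) emerge, and the dQI of Remark~\ref{rem:dQI} is indispensable for localizing \(\hat\beta_C-\beta_C\) to \(\Tcal\setminus\Tcalf\).
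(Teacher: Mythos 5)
Your architecture matches the paper's: discrete Helmholtz decomposition of the piecewise gradient (justified exactly as you say, by the mean-zero jump property of \(\CRf\) and a dimension count on a simply-connected domain), Pythagoras, Galerkin orthogonality plus nonconforming interpolation for the gradient part, and the discrete quasi-interpolation of Remark~\ref{rem:dQI} plus piecewise integration by parts for the curl part. Two steps, however, are not justified as written. First, in the gradient part you bound \(\NormLtwoOm{\htc\inv(\hat\alpha\crr-\Inc\hat\alpha\crr)}\) by invoking Theorem~\ref{thm:QI} with the constant \((\kappa^2+\cjc^2)^{1/2}\). That theorem concerns \(J=\Jc\circ\Inc\) acting on \(H^1_0(\Om)\); here \(\hat\alpha\crr\in\CRf\) is nonconforming and only \(\Inc\) (no \(\Jc\)) is applied, so Theorem~\ref{thm:QI} does not cover this term. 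The correct tool is the discrete nonconforming interpolation estimate \eqref{eqn:IntPolEst}, \(\NormLtwoOm{\htc\inv(\hat v\crr-\Inc\hat v\crr)}\le \kappa\crr\NormEnergyncOnly{\hat v\crr}\) with \(\kappa\crr=(1/8+\cp^2)^{1/2}=2^{-1/2}\), whose proof rests on the discrete Poincar\'e inequality of Theorem~\ref{thm:discPoin} — this is precisely where the first half of the paper enters, and skipping it leaves the gradient bound (and hence the stated value of \(\Lambda_3\)) unproved.

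Second, in the curl part you integrate by parts a second time along each edge so as to pair the kinks of \(\partial_\tau(\hat\beta_C-\beta_C)\) with the linear jump \(\jump{u\crr}_E\), and then assert that Cauchy--Schwarz, the inverse estimate, and \eqref{eqn:dQIApprox} ``produce the stated numerical factor.'' The paper stops one step earlier at \(\sum_{E\in\Ecal(\Tcal\setminus\Tcalf)}\int_E(\hat\beta_C-\beta_C)\jump{\partial u\crr/\partial s}_E\ds\) (the jump is constant on \(E\), so no midpoint expansion is needed) and the factor \((1+\cinv)\) in \(\Lambda_3\) arises from a specific trace inequality, \(\abs{E}\inv\NormLtwo{\hat\beta_C-\beta_C}{E}^2\le 4\cot(\omz)(1+\cinv)\NormLtwo{\htc\inv(\hat\beta_C-\beta_C)}{\omega_E}^2\), obtained from the trace identity of Lemma~\ref{lem:discreteTraceIdentity} applied to \((\hat\beta_C-\beta_C)^2\), a weighted Young inequality optimized in \(\lambda\), and Lemma~\ref{lem:invEst}. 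Your alternative edgewise manipulation is not wrong in principle, but it is unnecessary, and since you track none of its constants you have not established the claimed \(\Lambda_3=12\cot(\omz)(\kappa^2+\cjc^2)(1+\cinv)\); the quantitative heart of the curl estimate is missing.
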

\begin{proof}
 Given the solution $u\crr\in\CR$ (resp. $\hat u\crr\in\CRf$) to the discrete problem with respect to $\Tcal\in\Admis$ (resp. $\Tcalf\in\Admis(\Tcal)$), consider a discrete Helmholtz decomposition of $\nablanc u\crr\in P_0(\Tcal;\R^2)\ins P_0(\Tcalf;\R^2)$,
\begin{equation}\label{eqn:HHsplit}
 \nablanc u\crr =\nablanc \hat \alpha\crr +\Curl\hat\beta_C
\end{equation}
for unique $\hat\alpha\crr\in\CRf$ and $\hat\beta_C\in\Sof/\R$ so that
\begin{equation}\label{eqn:splitDelta}
 \delta\crr^2(\Tcal,\Tcalf)= \NormEnergyncOnly{u\crr-\hat u\crr}^2 = \NormEnergyncOnly{\hat\alpha\crr-\hat u\crr}^2 + \NormEnergy{\hat\beta_C}^2.
\end{equation}
Abbreviate $\hat v\crr:=\hat u\crr-\hat\alpha\crr\in\CRf$ and $v\crr:=\Inc\hat v\crr\in\CR$.
An analogeous proof to interpolation estimate for $\Inc:H^1_0(\Om)\rightarrow H^1_0(\Om)$ \cite[Theorem 2.1]{carstensenGedicke} with the discrete Poincar\'e constant $\cp=\sqrt{3/8}$ from Theorem \ref{thm:discPoin} and the discrete trace identity (Lemma \ref{lem:discreteTraceIdentity}) yields $\kappa\crr:=(1/8+\cp^{2})\poh=2\pmoh$ with
\begin{equation}\label{eqn:IntPolEst}
 \NormLtwoOm{h_\Tcal\inv(\hat v\crr - v\crr)}\leq \kappa\crr \NormEnergyncOnly{\hat v\crr}.
\end{equation}
Since $\hat u\crr$ solves the discrete problem on $\Tcalf$,
\begin{equation*}
 \NormEnergyncOnly{\hat u\crr-\hat\alpha\crr}^2 = a\nc(\hat u\crr,\hat v\crr)-a\nc(\hat\alpha\crr,\hat v\crr) = F(\hat v\crr) -a\nc(\hat\alpha\crr,\hat v\crr).
\end{equation*}
The orthogonal decomposition \eqref{eqn:HHsplit} and $\Pi_0\nablanc\hat v\crr = \nablanc \Inc\hat v\crr = \nablanc v\crr$ imply
\begin{align*}
 a\nc(\hat\alpha\crr,\hat v\crr) = (\nablanc u\crr,\nablanc \hat v\crr) = (\nablanc u\crr,\nablanc v\crr) = F(v\crr).
\end{align*}
The three last displayed formulas, the Cauchy inequality and $\hat v\crr-v\crr=0$ on $\Tcal\cap\Tcalf$ yield
\begin{align*}
 \NormEnergyncOnly{\hat u\crr-\hat\alpha\crr}^2 &= F(\hat v\crr-v\crr) = (f,\hat v\crr-v\crr)_{L^2(\Tcal\setminus\Tcalf)}\\
&\leq \kappa\crr \NormLtwo{h_\Tcal f}{\Tcal\setminus\Tcalf}  \NormEnergyncOnly{\hat u\crr-\hat\alpha\crr}.
\end{align*}
This and $h_K^2\leq 4\cot(\omz)\abs{K}$ for $K\in\Tcal$ show
\begin{equation}\label{eqn:EstAlphaU}
 2\NormEnergyncOnly{\hat u\crr-\hat\alpha\crr}^2\leq \NormLtwo{h_\Tcal f}{\Tcal\setminus\Tcalf}^2 \leq 4\cot(\omz) \sum_{K\in\Tcal\setminus\Tcalf} \abs{K} \NormLtwoK{f}^2.
\end{equation}

The estimate of $\NormEnergy{\hat\beta_C}$ utilizes the discrete quasi-interpolation $\beta_C\in \So$ of $\hat\beta_C\in \Sof$ from Remark \ref{rem:dQI}. A piecewise integration by parts, $\hat\beta_C=\beta_C$ on $\Tcal\cap\Tcalf$, and $\Ecal(\Tcal\setminus\Tcalf):=\bigcup_{K\in\Tcal\setminus\Tcalf} \Ecal(K)$ shows
\begin{align*}
 \NormEnergy{\hat\beta_C}^2 &= \int_\Om\Curl\hat\beta_C \cdot\nablanc u\crr\dx= \int_\Om\Curl(\hat\beta_C-\beta_C)\cdot\nablanc u\crr\dx\\
&= \sum_{K\in\Tcal\setminus\Tcalf} \int_K\Curl(\hat\beta_C-\beta_C) \cdot\nablanc u\crr\dx= \sum_{K\in\Tcal\setminus\Tcalf} \int_{\partial K} (\hat\beta_C-\beta_C)\partial u\crr/\partial s \ds\\
&= \sum_{E\in\Ecal(\Tcal\setminus\Tcalf)} \int_{E} (\hat\beta_C-\beta_C)\jump{\partial u\crr/\partial s}_E \ds.
\end{align*}
The trace identity on any $T\in\Tcal$ and $E\in\Ecal(T)$ with $v:=(\hat\beta_C-\beta_C)^2$ and the Cauchy inequality lead to
\begin{equation*}
 \abs{E}\inv \NormLtwo{\hat\beta_C-\beta_C}{E}^2 \leq \abs{T}\inv \big(\NormLtwoT{\hat\beta_C-\beta_C}^2 + h_T \NormLtwoT{\hat\beta_C-\beta_C} \NormEnergync{\hat\beta_C-\beta_C}{T}\big).
\end{equation*}
The estimate $\abs{T}\inv\leq 4\cot(\omz) h_T^{-2}$ and the weighted Young inequality for any $\lambda>0$ show
\begin{equation*}
 \abs{E}\inv \NormLtwo{\hat\beta_C-\beta_C}{E}^2 \leq 4\cot(\omz) \big((1+(2\lambda)\inv) \NormLtwoT{h_\Tcal\inv (\hat\beta_C-\beta_C)}^2 + \lambda/2 \NormEnergync{\hat\beta_C-\beta_C}{T}^2\big).
\end{equation*}
Hence, the inverse estimate and the direct minimization $\min_{\lambda>0}((2\lambda)\inv+\cinv^2\lambda/2) = \cinv$ prove, for $\ctr^2:= 4\cot(\omz)(1+\cinv)$, the trace inequality 
\begin{equation}\label{eqn:traceInequality}
 \abs{E}\inv \NormLtwo{\hat\beta_C-\beta_C}{E}^2 \leq \ctr^2 \NormLtwo{h_\Tcal\inv(\hat\beta_C-\beta_C)}{\omega_E}^2.
\end{equation}
This and the Cauchy inequality imply
\begin{align*}
 \NormEnergy{\hat\beta_C}^2 &\leq \sum_{E\in\Ecal(\Tcal\setminus\Tcalf)} \int_{E} \abs{E}\pmoh\abs{\hat\beta_C-\beta_C}\abs{E}\poh \abs{\jump{\partial u\crr/\partial s}_E} \ds\\
&\leq \sqrt{\sum_{E\in\Ecal(\Tcal\setminus\Tcalf)} \abs{E}\inv \NormLtwo{\hat\beta_C-\beta_C}{E}^2}\sqrt{\sum_{E\in\Ecal(\Tcal\setminus\Tcalf)} \abs{E} \NormLtwo{\jump{\partial u\crr/\partial s}_E}{E}^2}\\
&\leq \sqrt{3}\ctr \NormLtwoOm{h_\Tcal\inv (\hat\beta_C-\beta_C)} \sqrt{\sum_{E\in\Ecal(\Tcal\setminus\Tcalf)} \abs{E} \NormLtwo{\jump{\partial u\crr/\partial s}_E}{E}^2}.
\end{align*}
The first-order approximation property \eqref{eqn:dQIApprox} of the discrete quasi-interpolation, $\abs{E}\leq 2\cot(\omz)\poh \abs{T}\poh$, \eqref{eqn:splitDelta} and \eqref{eqn:EstAlphaU} with $2\cot(\omz)\leq 24\cot(\omz)^{3/2} (\kappa^2+\cjc^2)(1+\cinv)$ conclude the proof.
\end{proof}
% Begründung für letzte Abschätzung:
% 1-cos <= 2, daher cjc^2 >= sqrt(3)cot omz/4
% omz >= 60°, daher cot omz >= 1/sqrt(3), cjc^2 >= 1/4, daher cjc^2+kappa^2 >= 1/4 + 0.298^2 >= 0.3388
% 1+cinv >= 1+cinv(omz=60°) = 1+sqrt(24) >= 5.898 (da formel fuer cinv monoton fallend bis 60° (Wolfram Alpha)
% 12*0.3388*5.898 etwa 23.97 > 2 !!! Und 2/\sqrt(3) > 1.

\begin{example}
 For right isosceles triangles, it holds $\Lambda_1^2\leq 34.97$ and $\Lambda_3\leq 4521$ and \eqref{eqn:theta} leads to $\theta_0\geq 6.3 \times 10^{-6}$ for the Crouzeix-Raviart FEM, despite the general wisdom that \(\theta=0.3\) leads to optimal convergence.
% cinv = sqrt(72)
% capx = 6.2952  (capx^2 = 39.6306)
% ALT capx = 3.3729  (capx^2 = 11.3765)
% kappa = 0.2982  (kappa^2 = 0.0889)
%  12\cot(\omz) (\kappa^2+\cjc^2)(1+\cinv)  = 12 * 1 * 11.4654 * 9.4853 = 1305.010
\end{example}

\begin{acknow} 
 The authors acknowledge support of the Deutsche
      Forschungsgemeinschaft in the Priority Program 1748 \enquote{Reliable
      simulation techniques in solid mechanics. Development of non-standard discretization methods, mechanical and mathematical
      analysis} under the project \enquote{Foundation and application of
      generalized mixed FEM towards nonlinear problems in solid mechanics}.
Parts of the manuscript have been finalized while the first author enjoyed the 
fruitful atmosphere of the IHP quarter on Numerical Methods for PDEs
in Paris; the support through the program is thankfully acknowledged. 
The second author is supported by the Berlin Mathematical School.
\end{acknow}

\printbibliography

\end{document}